\newcommand{\pdt}[1]{\frac{\partial #1}{\partial t}}
\newcommand{\R}{{\mathbb R}}
\newcommand{\cL}{\mathcal L}
\DeclareMathOperator*{\argmax}{\arg\!\max}
\newcommand{\deriv}[2]{\frac{d #1}{d #2}}
\def\BE{\begin{equation}}
\def\EE{\end{equation}}
\def\BS{\begin{split}}
\def\ES{\end{split}}
\def\div{\text{div}}
\newcommand{\tony}[1]{#1}
\newcommand{\panos}[1]{#1}
\newcommand{\panosb}[1]{#1}
\crefname{hypothesis}{Hypothesis}{Hypotheses}
\title{Using Witten Laplacians to locate index-1 saddle points\thanks{Submitted to the editors \today.
}}
\author{Tony Lelièvre, \thanks{CERMICS, Ecole des Ponts, Inria, Paris, France.
  \email{tony.lelievre@enpc.fr}, \url{cermics.enpc.fr/\~lelievre/home.html}}
\and Panos Parpas\thanks{Department of Computing, Imperial College London, UK.
  \email{p.parpas@imperial.ac.uk},\url{www.doc.ic.ac.uk/\~pp500}}}
\begin{document}

\maketitle

\begin{abstract}
    We introduce a new stochastic algorithm to locate the index-1 saddle points of a function $V:\R^d \to \R$, with $d$ possibly large. This algorithm can be seen as an equivalent of the stochastic gradient descent which is a natural stochastic process to locate local minima. It relies on two ingredients: (i) the concentration properties on index-1 saddle points of the first eigenmodes of the Witten Laplacian (associated with $V$) on $1$-forms and (ii) a probabilistic representation of a partial differential equation involving this differential operator. Numerical examples on simple molecular systems illustrate the efficacy of the proposed approach.
\end{abstract}

\begin{keywords}
Saddle point search, Witten Laplacian, dimer method
\end{keywords}

\begin{MSCcodes}
65C35, 37N30, 65K10
\end{MSCcodes}

\section{Introduction and motivation}

A key goal of molecular simulations is to determine reaction mechanisms. These reaction mechanisms are associated with transition paths between local minima of the potential energy surface, which models the interaction between the atoms. Due to the large number of atoms considered in practice, the potential energy function is typically defined on a high-dimensional space.
In order to simulate the dynamics of the molecular system and thus identify these transition mechanisms, one can either use all-atom molecular dynamics or state-to-state dynamics. The former consists in evolving the positions and velocities of all the atoms as continuous functions of time, following typically thermostated Hamiltonian dynamics such as the Langevin dynamics~\cite{allen-tildesley-87,frenkel-smit-02,tuckerman2010statistical}. 
The latter (a.k.a. kinetic Monte Carlo models~\cite{voter-05}  or Markov State Models~\cite{bowman-pande-noe-14}) directly models the jumps between local minima of the potential energy which are separated by index-1 saddle points.
\panosb{Index-1 saddle points are critical points of the energy function such that the Hessian has exactly one negative eigenvalue. }
Local minima have basins of attractions with intersecting boundaries, and the local minima energy points lying in this intersection are indeed necessarily index-1 saddle points. This latter approach can reach much longer timescales than the former one, since it does not require to simulate the details of the trajectories within the wells of the potential energy function.
In practice, the simulation of  state-to-state dynamics requires to identify the local minima as well as the index-1 saddle points connecting them. 
Then, one can compute the rate constants associated with transitions through the different index-1 saddle points using the so-called Harmonic Transition State Theory and Eyring-Kramers laws~\cite{voter-05,wales2003energy,wales2006energy}. 
The fact that the kinetic Monte Carlo dynamics is a good approximation of the Langevin dynamics can be formalized by studying the so-called exit problem in the small noise (namely small temperature) regime, using for example the large deviations theory~\cite{freidlin-wentzell-84} or the quasi-stationary distribution approach~\cite{di-gesu-lelievre-le-peutrec-nectoux-17}. 
This problem is of general interest to the study of noise-induced transition between metastable states separated by energetic barriers~\cite{hanggi1990reaction}: under the influence of small noise, with high probability, the escape pathway has to go through the neighborhood of an index-1 saddle point.

In view of the computational efficiency of the simulation of the state-to-state dynamics described above, many algorithms have been developed to locate local minima and index-1 saddle points of a potential energy function in high dimension. Finding local minima is a standard optimization task, for which a large number of algorithms exist with a well understood properties both in theory and in practice~\cite{conn2000trust,nocedal1999numerical,kelley1999iterative,fletcher2013practical}.  Finding index-1 saddle points is a much harder task. It is beyond the scope of this paper to provide a comprehensive review of numerical methods to locate index-1 saddle points, see the review papers~\cite{schlegel2011geometry,henkelman2002methods} for some pointers to the large literature on this subject.  Broadly speaking, there are two classes of algorithms: double-ended a.k.a. chain-of-state methods and single-ended a.k.a. surface-walking methods. Examples of algorithms belonging to the former family are the nudged elastic band method~\cite{henkelman2000climbing,henkelman2000improved} or the string method~\cite{weinan2002string}: they can be seen as a numerical version of the mountain pass theorem~\cite{bisgard2015mountain} to identify index-1 saddle points. 
They locate index-1 saddle points along minimizing energy paths connecting two fixed local minima. 
They thus require the input of two minima and the global optimization of a non-differentiable problem (namely the minimization over paths of a max function). 
In particular, the results obtained with standard iterative optimization methods typically highly depend on the initial condition of the optimization procedure, namely the initial path between the two local minima. 
For a mathematical analysis, \panosb{we refer to~\cite{cameron2011string,liu2022convergence,liu2022stability}}.
The algorithms in the latter family (single-ended methods) are sometimes called drag methods~\cite{henkelman2002methods}: they indeed consist in successively updating a one-dimensional drag coordinate towards larger energy, and then minimizing the energy of the system in the codimension 1 hyperplane orthogonal to the drag coordinate. 
Early contributions in that direction are~\cite{crippen1971minimization,cerjan1981finding,wales1989finding}.  
\panos{\textbf{}Examples of such methods include the dimer method 
(also known as the eigenvector-following method)~\cite{munro1999defect,henkelman1999dimer,zhang2012shrinking,zhang2016optimization}}, the Activation Relaxation Technique~\cite{mousseau1998traveling,barkema2001activation,cances2009some} and the Gentlest Ascent Dynamics~\cite{weinan2011gentlest,gao2015iterative,journel2023switched}. 
It is known that the efficiency of these techniques also depends on the initialization procedure. For example, in~\cite{levitt2017convergence}, it is shown that the dimer method converges if initialized in neighborhoods of index-1 saddle points, but may otherwise remain trapped in subsets which do not contain any index-1 saddle points. 

Beyond the references mentioned above, there is a vast literature on index-1 saddle point finding algorithms. 
For example, recent variants of the above algorithms aim at reducing the cost by using surrogate models to estimate the gradient of the potential energy~\cite{denzel2018gaussian,koistinen2017nudged,torres2019low}.
Moreover, here are a few examples of other algorithms which do not conform to the classification presented above, and which we will not discuss further since they are less related to our approach: the rational function optimization method~\cite{banerjee1985search}, the step and slide method~\cite{miron2001step}, the biased gradient squared descent~\cite{duncan2014biased}, simulated annealing methods~\cite{chaudhury1998simulated}, genetic algorithms~\cite{ellabaan2009finding}, and branch-and-bound algorithms~\cite{nerantzis2017enclosure}.
This (incomplete) list illustrates the importance of this challenging problem.

In this work, we introduce an algorithm to locate index-$1$ saddle points, which is based on a stochastic interacting particle system and a selection mechanism. 
The proposed algorithm  can be initialized in any basin of attraction of a local minimum of the potential energy function. The algorithm will then attempt to identify the possible exit pathways. As will become clear below, this interacting particle system approximates the solution to a partial differential equation on vector fields (see Equation~\eqref{eq:FP1} below), which can be seen as a generalization of the Fokker-Planck equation for the overdamped Langevin dynamics associated with the potential energy function of interest. 
More precisely, the method we propose derives from considerations on the long-time behavior of the semi-group associated with the Witten Laplacian on $1$-forms associated with the potential energy function of interest (the semi-group associated with the Witten Laplacian on $0$-forms, namely functions, is nothing but the Fokker-Planck equation mentioned above). 
It indeed can be shown that, in the small temperature regime, the vector fields solutions to the partial differential equation~\eqref{eq:FP1} concentrate on index-$1$ saddle points. 
Using the probabilistic representation of such solutions by an interacting particle system, we are thus able to evolve particles which concentrate on neighborhoods of index-1 saddle points: a simple local search (such as the dimer method) can then be used to precisely identify these index-$1$ saddle points.\panos{The application of the local search algorithm is purely a practical step that enables us to accurately identify saddle points. To provide a clearer understanding of our algorithm, it can be viewed as operating in two distinct phases (which are run iteratively in practice).
During the first phase, the interacting particle system focuses on detecting points that are in proximity to index-1 saddle points. The initial phase may not precisely identify the saddle point because we use a constant positive temperature in the particle system.
To achieve a more accurate solution, the second phase relies on a local search method. By employing this approach, we can improve the precision of the algorithm without having to develop an annealing schedule. Notably, in the vicinity of an index-1 saddle point, the dimer algorithm exhibits a linear convergence rate. Consequently, our algorithm endeavors to combine global information derived from the Witten PDE with the effectiveness of local search methods.}

The algorithm we propose is different from the ones discussed above in two ways. 
First, its mathematical foundation, relying on the concentration on index-$1$ saddle points of the $1$-eigenforms of the Witten Laplacian, seems to be new.
Moreover, it is intrinsically based on an interacting particle system.  
The closest algorithm in the literature is probably the Lyapunov weighted dynamics introduced in~\cite{tailleur2007probing}, even though the selection mechanism used there is different in nature: it consists in looking at how solutions starting from close initial conditions diverge. 

In summary, the novelties of this work are twofold. First, we introduce a (to the best of our knowledge) new paradigm to locate index-1 saddle points, using the long-time behavior of the semi-group associated with the Witten Laplacian on $1$-forms. 
Second, we propose a stochastic representation of this semi-group (see Proposition~\ref{prop:FP1}) which can then be used in a concrete algorithm to numerically compute index-1 saddle points. 
The computational requirements of the proposed method are similar to existing methods, and we report encouraging numerical results for standard benchmark problems. 
Let us however emphasize that the objective of this paper is methodological in nature. 
In particular, we do not perform a thorough numerical comparison of the efficiency of our algorithm with other index-1 saddle point search algorithms, neither do we fully optimize the implementation of the numerical method we propose. 
These aspects are left for future work. 

This is an outline of this work. In Section~\ref{sec:Witten}, we present the main theoretical ingredients of the algorithm we propose. The precise description of the algorithm is provided in Section~\ref{sec:algo}. Section~\ref{sec:numerical} is then devoted to numerical experiments to illustrate the properties of the algorithm on simple 2d test cases and two large-dimensional test cases. Finally, Section~\ref{sec:conc} provides a short summary and a few perspectives.

\section{Index\texorpdfstring{\(-1\)}{-1} saddle points and Witten Laplacian on 
\texorpdfstring{\(1-\)}{1-}forms}\label{sec:Witten}

Let us consider a smooth energy function
\begin{equation}\label{eq:V}
V: \R^d \to \R.
\end{equation}
In all the following, it is assumed that the potential energy function $V$ is a Morse function: the Hessian of $V$ at the critical points of $V$ is assumed to be non-degenerate. 
The number of negative eigenvalues is called the index of the critical point. 
We are interested in the index-1 saddle points of~$V$, and more precisely in stochastic algorithms which would be able to concentrate on such points, in a similar way as the overdamped Langevin dynamics concentrate on local minima. 
In the following, we may sometimes omit to explicitly indicate the indices of the saddle points we consider, since we only deal with index-1 saddle points.

This section is organized as follows. 
First, in Section~\ref{sec:0form}, we recall how a noisy steepest descent algorithm (overdamped Langevin dynamics) can be used to find local minima of~$V$, and we give an interpretation of this fact using the Fokker-Planck partial differential equation associated with the dynamics. 
Then, we explain in Section~\ref{sec:1form} how to generalize the Fokker-Planck equation from functions (0-forms) to vector fields (1-forms) and why this is useful to locate index-1 saddle points. 
Finally, Section~\ref{sec:MC1form} introduces a probabilistic interpretation of this generalized Fokker Planck equation that will be used in order to build stochastic dynamics which concentrate on the index-1 saddle points.

\subsection{Local minima, steepest decent and the Witten Laplacian on 0-forms}\label{sec:0form}

In order to introduce the main idea of the paper, let us start with simple observations concerning local minima (namely critical points with index $0$) and the overdamped Langevin dynamics (gradient descent perturbed by noise).

The overdamped Langevin dynamics, for a given inverse temperature $\beta >0$ are given by,
\BE\label{eq:X}
dX_t=-\nabla V(X_t) \, dt+\sqrt{2\beta^{-1}} \, dB_t
\EE
complemented with an initial condition $X_0 \in \R^d$. Here and in the following, $(B_t)_{t \ge 0}$ denotes a $d$-dimensional  Brownian motion. 
The stochastic process $(X_t)_{t \ge 0}$ is ergodic with respect to Boltzmann-Gibbs measure $Z^{-1} \exp(-\beta V(x)) \, dx$ (where we here assume that $Z=\int_{\R^d} \exp(-\beta V) < \infty$) and therefore, in the small temperature regime $\beta \to \infty$, the stochastic process spends most of its time in the lowest energy local minima of~$V$. 
This remark is used in many optimization algorithms which aim at finding the local minima of~$V$, using gradient descent perturbed by small noise.

It is also possible to interpret the properties of the stochastic process $(X_t)_{t \ge 0}$ in the small temperature regime using the so-called semi-classical asymptotic behaviors of associated partial differential equations. More precisely, let us consider the differential operator, defined on a smooth test function $\rho : \R^d \to \R$:
\BE
\cL^*\rho= \div\left({\rho\nabla V}\right)+\beta^{-1} \Delta\rho.
\EE
The Fokker-Planck equation~\cite{risken1996fokker} gives the evolution of the density $\rho(t,x)$ of the time marginals of $(X_t)_{t \ge 0}$ solution to~\eqref{eq:X}:
\BE\label{eq:FP}
\pdt{\rho}= \cL^* \rho,
\EE
where the initial condition $\rho_0$ is assumed to be the density of $X_0$: $\rho(t,x)$ is the density of the random variable $X_t$, for all time $t\ge 0$. In order to describe the Fokker-Planck dynamics in the small temperature regime ($\beta^{-1} \to 0$), let us recall some information on the spectrum of $\cL^*$ in this regime, using the so-called Witten Laplacian on 0-forms, i.e. real-valued functions.

The Witten Laplacian on $0$-forms associated with $V$ and the small parameter $h=2 \beta^{-1}$ is defined by, for a smooth test function $u:\R^d \to \R$:
\begin{equation}\label{eq:Witten0}
    \Delta^{(0)}_{V,h} u = -h^2 \Delta u + (|\nabla V|^2 - h \Delta V) u.
\end{equation}
There is a simple link between $\cL^*$ and $\Delta^{(0)}_{V,h}$: for any smooth test function $u$,
\begin{equation}\label{eq:unitary}
\exp(V/h) \cL^* (u \exp(-V/h)) = -\frac{1}{2h} \Delta^{(0)}_{V,h} u.
\end{equation}
In particular, the spectral properties of $\cL^*$ can be deduced from the spectral properties of $\Delta^{(0)}_{V,h}$ in $L^2(\R^d)$:
$$- \cL^* \rho = \lambda \rho \iff \Delta^{(0)}_{V,h} u = 2 h \lambda u,$$
with $u=\rho \exp(V/h)$. \tony{Notice that Equation~\eqref{eq:Witten0} is nothing but a Schr\"odinger operator with the $h$-dependent potential $|\nabla V|^2 - h \Delta V$.}

It is easy to check\footnote{This is a consequence of the fact that $\Delta^{(0)}_{V,h}= d^{(0),*}_{V,h} d^{(0)}_{V,h}$ where $d^{(0)}_{V,h}=e^{-V/h} (h \nabla) e^{V/h} $ and $d^{(0),*}_{V,h}=-e^{V/h} (h {\rm div}) e^{-V/h}$ is the $L^2$-adjoint of $d^{(0)}_{V,h}$.} that operator $\Delta^{(0)}_{V,h}$ is non negative and symmetric for the $L^2(\R^d)$ scalar product. Under some generic assumptions on $V$ (one of them being that $V$ is a Morse function), one can prove the following properties on $\Delta^{(0)}_{V,h}$ (we refer to~\cite{simon1983semiclassical,helffer1985puits} or~\cite[Theorem 3.9]{le2013precise} for precise statements on the eigenvalues and to~\cite{helffer2006semi,helffer1985puits2,dimassi1999spectral} for precise statements on the eigenfunctions\footnote{\label{fn}As mentioned in~\cite{simon1983semiclassical}, these results are well known
in the folk wisdom of theoretical physics, and it is beyond the scope of this work to state mathematically precise asymptotic results. This would indeed require to introduce many notation and notions, such as the Agmon distance or Morse coordinates. We prefer to stick to a gentle presentation which is sufficient for our purpose. The essential results underpinning our algorithms will be supported by numerical illustrations when we will consider $1$-forms in the next section, see Example~\ref{example:double_well}.}):
\begin{itemize}
    \item On the eigenvalues: For $h$ sufficiently small, the operator $\Delta^{(0)}_{V,h}$ admits exactly $m^{(0)}$ eigenvalues smaller than $ch$ (for some $c>0$), where $m^{(0)}$ is the number of local minima of~$V$. Moreover, these eigenvalues are exponentially small when $h \to 0$, and the rest of the spectrum is bounded from below by a constant independent of $h$.
    \item On the eigenfunctions: The eigenfunctions associated with these $m^{(0)}$ small eigenvalues are essentially supported in neighborhoods of the local minima of $V$. More precisely, one can prove so-called Agmon-type decay estimate which shows that the eigenvectors decay exponentially fast away from the local minima, in the Agmon distance. 
    In particular, the first eigenvalue is $\lambda_1=0$ associated with the ($L^2$-normalized) eigenfunction $u_1=\exp(-V/h)/\sqrt{\int_{\R^d} \exp(-2V/h)}$.
\end{itemize}
The qualitative properties above, give a clear picture of the evolution in time of $\rho$ i.e. the solution to~\eqref{eq:FP}. Indeed, in the small temperature regime, by projecting the evolution on the dominant eigenmodes $(u_k)_{1 \le k \le m^{(0)}}$ and using~\eqref{eq:unitary}, one has (assuming without loss of generality that the family of functions $(u_k)_{1 \le k \le m^{(0)}}$ is orthonormal in $L^2$): for all $t \ge 0$,
\begin{equation}\label{eq:expansion}
\rho(t) \simeq \sum_{k=1}^{m^{(0)}} \exp\left(-\frac{\lambda_k}{2h} t\right) \, \left\langle \rho_0 \exp\left(  V / h \right) ,u_k \right\rangle \, u_k \exp\left(-  V / h\right)
\end{equation}
the remaining terms decaying much faster in time than these first $m^{(0)}$ terms since they are associated with eigenvalues of $\Delta^{(0)}_{V,h}$ (or equivalently of $-\cL^*$) \tony{which are bounded from below by a constant times $h$, and thus much larger than $\lambda_1, \ldots, \lambda_{m^{(0)}}$}. Here, $$\langle \rho_0 \exp(V/h) ,u_k \rangle=\int_{\R^d} \rho_0 \exp(V/h) u_k$$
is the coefficient associated with the $L^2(\R^d)$ orthogonal projection of $\rho_0 \exp(V/h)$ on the $k$-th eigenmode $u_k$. Notice that the first term in the sum (for $k=1$) is simply $ \exp(-2V/h)/\int_{\R^d} \exp(-2V/h) = Z^{-1} \exp(-\beta V)$, which is consistent with the fact that the longtime limit of $\rho$ is the Boltzmann-Gibbs measure associated with $V$.  Moreover, one infers from~\eqref{eq:expansion}  that before convergence, the density~$\rho$ is essentially a linear combination of the functions $u_k \exp(-V/h)$ which are concentrated around the local minima of $V$. This can be seen as another illustration of the fact that the overdamped Langevin dynamics~\eqref{eq:X} spends most of its time around the local minima of~$V$.

We will now show in the next section how a similar relationship between a stochastic differential equation and partial differential equation can be used in order to visit the index-1 saddle points of~$V$.

\subsection{Index-1 saddle points and the Witten Laplacian on 1-forms}\label{sec:1form}

Similarly to the Witten Laplacian $\Delta^{(0)}_{V,h} $ on $0$-forms (see~\eqref{eq:Witten0}), one can introduce the Witten Laplacian on $1$-forms, namely on vector fields\footnote{\tony{We here identify 1-forms with vector fields using the fact that the underlying scalar product is the Euclidean one: the components of $v$ are thus the components of the $1$-form on the basis $dx_1$, ... $dx_d$.}} $v:\R^d \to \R^d$
\begin{equation}\label{eq:Witten1}
    \Delta^{(1)}_{V,h} v = -h^2 \Delta v + (|\nabla V|^2 - h \Delta V) v + 2h \nabla^2 V v,
\end{equation}
where $\nabla^2 V$ denotes the Hessian of $V$, and the last term is thus a matrix-vector product. 
Similarly to $\Delta^{(0)}_{V,h}$, the operator $\Delta^{(1)}_{V,h}$  is non negative and symmetric 
\footnote{Again, this is a consequence of the fact that $\Delta^{(1)}_{V,h}= d^{(1),*}_{V,h} d^{(1)}_{V,h} + d^{(1)}_{V,h} d^{(1),*}_{V,h}$ where $d^{(1)}_{V,h}=e^{-V/h} h d^{(1)} e^{V/h} $ with $d^{(1)}$ the differential operator (exterior derivative) on $1$-forms, and $d^{(1),*}_{V,h}$ is the $L^2$-adjoint of $d^{(1)}_{V,h}$.} 
in $(L^2(\R^d))^d$. 
Under some generic assumptions on $V$, one can prove the following properties on $\Delta^{(1)}_{V,h}$ (we again refer to~\cite{helffer1985puits,le2013precise,helffer2006semi,helffer1985puits2,dimassi1999spectral} for precise statements and generalizations to Witten Laplacian on $k$-forms and index-$k$ saddle points, see 
also the footnote \textsuperscript{\ref{fn}}).
\begin{itemize}
    \item On the eigenvalues: For $h$ sufficiently small, the operator $\Delta^{(1)}_{V,h}$ admits exactly $m^{(1)}$ eigenvalues smaller than $ch$ (for some $c>0$), where $m^{(1)}$ is the number of index-1 saddle points of $V$. Moreover, these eigenvalues are exponentially small when $h \to 0$, and the rest of the spectrum is bounded from below by a constant independent of $h$.
    \item On the eigenforms: The eigenforms associated with these $m^{(1)}$ small eigenvalues are concentrated in neighborhoods of the index-1 saddle points of $V$: \tony{this means that the norms of these eigenforms (or equivalently of the associated vector fields) are essentially supported in those neighborhoods}. More precisely, one can prove so-called Agmon-type decay estimate which shows that the norms of the eigenforms decay exponentially fast away from the index-$1$ saddle points, in the Agmon distance. \tony{Moreover, in the vicinity of a saddle point where it concentrates, the eigenform  is essentially aligned in the direction of the eigenvector associated with the negative eigenvalue of the Hessian of $V$ at this saddle point}. 
\end{itemize}
In view of these theoretical properties and mimicking the considerations of Section~\ref{sec:0form} on algorithms to find local minima of $V$, it is now tempting to try and find a probabilistic interpretation of the partial differential equation:
\BE \label{eq: witten pde}
\frac{\partial v}{\partial t} + \Delta^{(1)}_{V,h} v  = 0
\EE
since this should provide an algorithm to find index-1 saddle points of $V$.

To do so, let us first use again the transformation~\eqref{eq:unitary} (with, as before $h=2\beta^{-1}$), and let us thus introduce the operator $\tilde \cL^*$ defined by: for any test function $v:\R^d \to \R^d$
$$[\tilde \cL^* v]_i = \cL^* v_i - [\nabla^2 V v]_i$$
where here and in the following, the subscript $_i$  (with $1 \le i \le d$) denotes the $i$-th component of a vector in~$\R^d$. One can check that
\begin{equation}\label{eq:unitary1}
\exp(V/h) \tilde \cL^* (v \exp(-V/h)) = -\frac{1}{2h} \Delta^{(1)}_{V,h} v.
\end{equation}
As in the previous section, the spectral properties of $\tilde \cL^*$ can be deduced from the spectral properties of $\Delta^{(1)}_{V,h}$. In particular, the solution to the partial differential equation (which is the equivalent on $1$-forms to the Fokker-Planck equation~\eqref{eq:FP} on $0$-forms)
\BE\label{eq:FP1}
\pdt{\phi}= \tilde \cL^* \phi
\EE
supplemented with an initial condition $\phi_0$, should be such that it concentrates on index-1 saddle points of $V$ as time increases, since the dominating eigenforms concentrate on those saddle points (see Example~\ref{example:double_well} below for an illustration on a 2d toy example). Here, $\phi$ is a function of $t>0$ and $x \in \R^d$, and takes values in $\R^d$.  In the following, we will refer to the partial differential equation~\eqref{eq:FP1} as the Witten partial differential equation.

Of course, solving directly the Witten partial differential equation~\eqref{eq:FP1} is only possible for low-dimensional toy potentials (such as the one used in Example~\ref{example:double_well} below). 
However, just like the overdamped Langevin dynamics in 
\eqref{eq:X} offer a way to approximate the solution of the Fokker-Planck equation in high-dimensions we can legitimately ask if there is a similar stochastic representation of the solution to the Witten partial differential equation. The stochastic dynamics will of course only yield a noisy approximation  but will also offer a way to compute saddle points in high dimensions. We provide such a stochastic interpretation of the Witten partial differential equation~\eqref{eq:FP1} in the next section.




\begin{example}\label{example:double_well}
In order to illustrate the theoretical properties of the differential operators we presented above, let us close this section with a numerical comparison between the longtime behaviors of the Fokker-Planck partial differential equation~\eqref{eq:FP} and the Witten partial differential equation~\eqref{eq:FP1} on a 2d toy problem, with the double well potential:
\begin{equation}\label{eq:double_well}
V(x_1,x_2)=E(Cx_1^4-x_1^2)+\mu x_2^2,
\end{equation}
where $E=2\times10^{-4}, \ C=0.045, \text{ and } \mu = 0.001$. The potential energy function $V$ admits two local minima at 
$[\pm1/2C,0]$ and a single index-1 saddle point at $[0,0]$. We solve both equations with an explicit finite difference scheme.
 We use the following initial conditions: 
$\rho_0(x_1,x_2)=\exp(-((x_1-0.5)^2+(x_2-0.5)^2)/\sigma_0)$ with $\sigma_0=0.001$, and $\phi_0(x_1,x_2)=[\rho_0(x_1,x_2),\rho_0(x_1,x_2)]$, and the inverse temperature parameter $\beta=10^3$.
As expected the solution of the Fokker-Planck equation concentrates equally on the two local minima (see Figure \ref{fig:pde fokker}).
On the other hand, the first component of the solution of the Witten partial differential equation, denoted by $\phi_1(x_1,x_2)$ (left of Figure \ref{fig:pde vec}) concentrates on the only saddle point of the function at $[0,0]$, while the second component, denoted by $\phi_2(x_1,x_2)$ (right of Figure \ref{fig:pde vec}) eventually vanishes. Notice the difference in scale between the two plots for  $\phi_1(x_1,x_2)$ and $\phi_2(x_1,x_2)$.
We thus observe that, as expected, in the longtime limit, the solution of~\eqref{eq:FP1}  concentrates on the index-1 saddle point of the potential, and aligns along the eigenvector associated with the negative eigenvalue of the Hessian of $V$ at the saddle point. This illustrates the fact that the solution of the Witten partial differential equation can indeed be used to locate the index-1 saddle points of the potential $V$. \tony{Let us insist on the fact that this concentration phenomenon occurs whatever the initial condition: this is at variance with other saddle points localization techniques whose convergence properties depend on the initialization, as discussed in the introduction.}
\end{example}

 \begin{figure}\label{fig: fokker witten}
\centering  
\subfigure[Fokker-Planck partial differential equation for a double-well potential.]{\label{fig:pde fokker}
\includegraphics[width=0.4\textwidth]{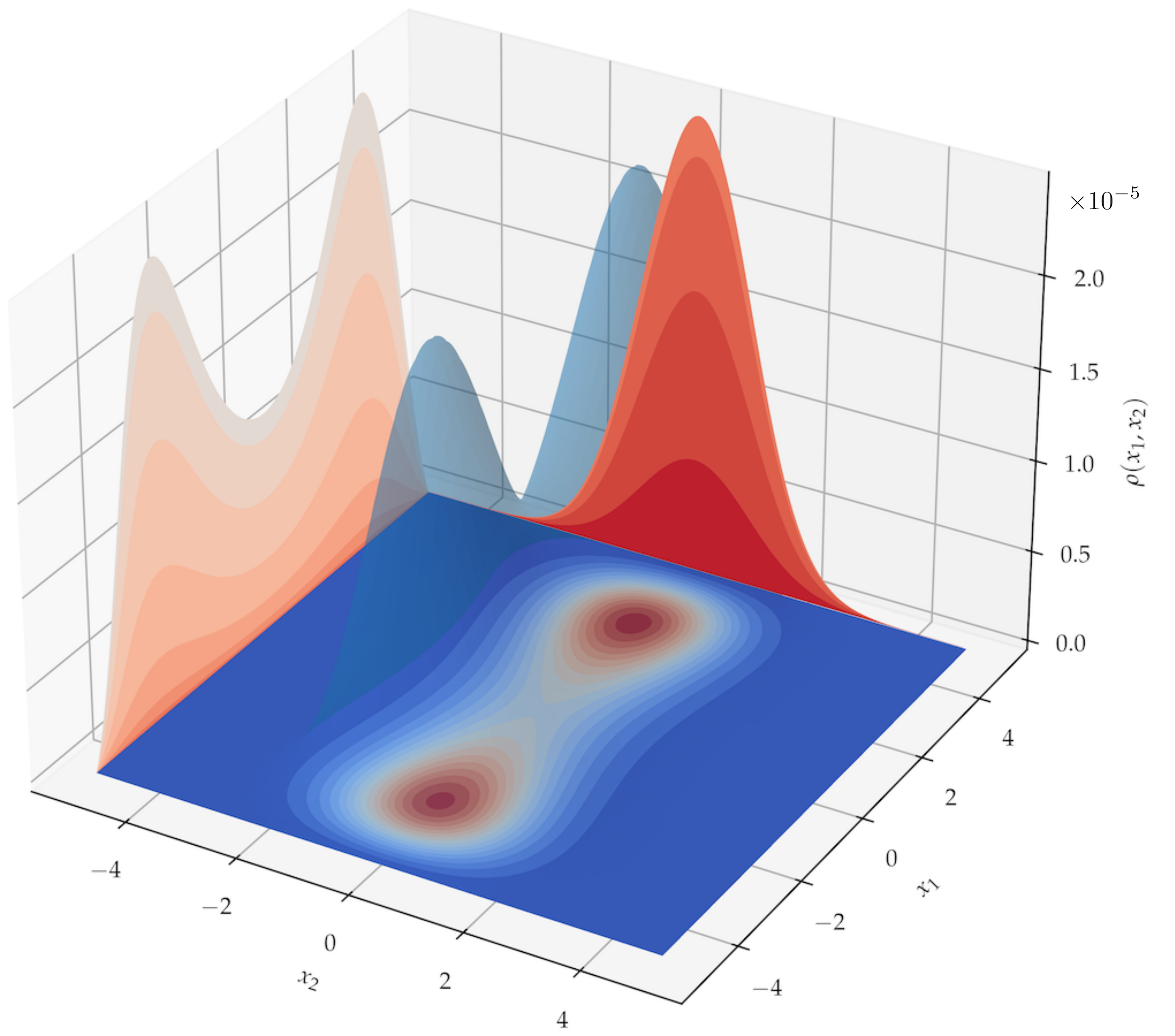}
} 
\subfigure[Witten partial differential equation \panosb{for} a double-well potential.]{\label{fig:pde vec}\includegraphics[width=0.87\textwidth]{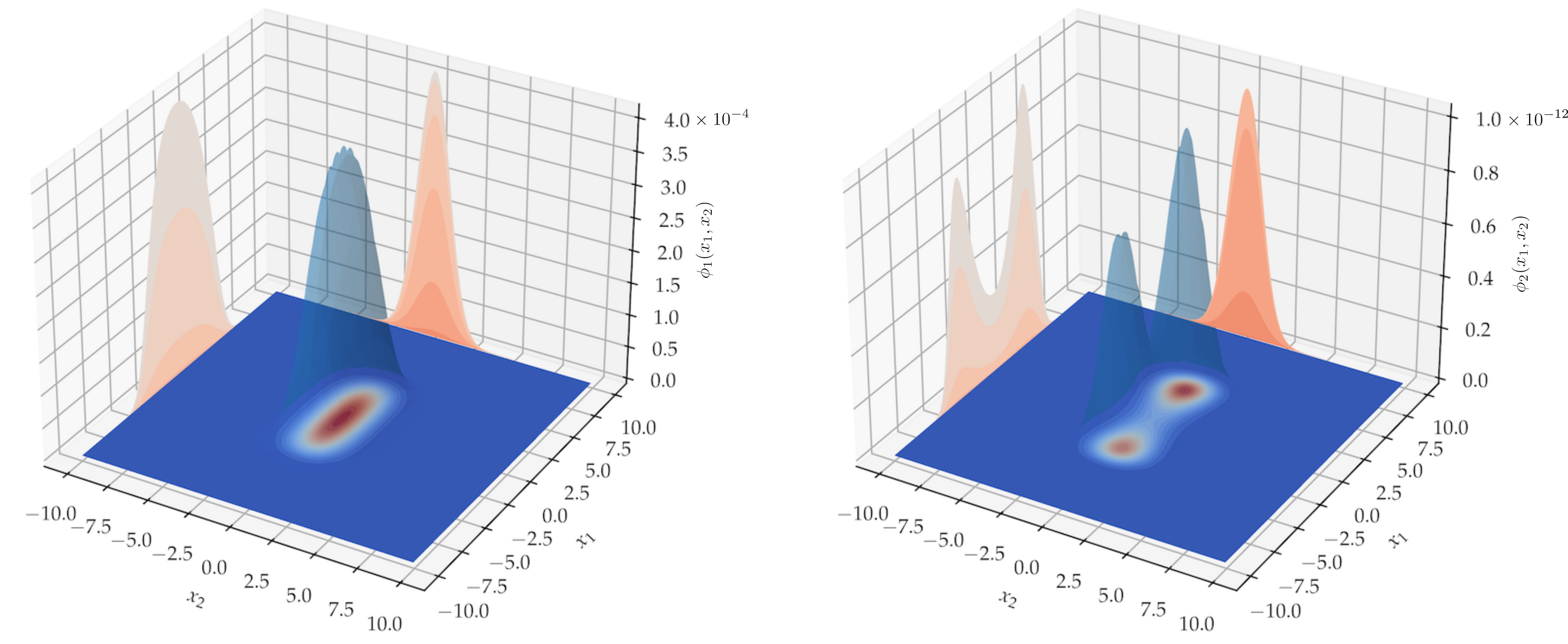}}
\caption{(a) The solution of the Fokker-Planck equation~\eqref{eq:FP}  for the double-well potential~\eqref{eq:double_well}. 
(b) The solution of the Witten partial differential equation~\eqref{eq:FP1} for the same potential ($\phi_1$ is represented on the left plot, and $\phi_2$ on the right one). All these solutions are plotted at a large time $t$, close to stationary state.
We provide contour plots and 3d representations of the three functions, together with 2d projections along the $x_1$ and $x_2$ axes: for example, the projections in red on (a) represent $\int\rho(x_1,x_2)dx_2$ and $\int\rho(x_1,x_2)dx_1$.
An animation associated with this Figure is available in the supplementary material.
} 
\end{figure}

\subsection{A probabilistic interpretation of the partial differential equation
\texorpdfstring{(\ref{eq:FP1})}{(\ref{eq:FP1})}
}\label{sec:MC1form}

In order to give a probabilistic interpretation to~\eqref{eq:FP1}, we take inspiration from the works in~\cite{elworthy1994formulae,elworthy2008l2}  which introduce the stochastic flow associated with~\eqref{eq:X} in order to draw links between differential operators on differential forms and stochastic processes. Let us introduce the overdamped Langevin dynamics~\eqref{eq:X} augmented with second order information:
\BE\label{eq:sde full}
\left\{
\begin{aligned}
 dX_t&=-\nabla V(X_t)\, dt+\sqrt{2\beta^{-1}} \, dB_t, \\
 dY_t&=-\nabla^2 V(X_t)Y_t \, dt, 
\end{aligned}
\right.
\EE
complemented with an initial condition $(X_0,Y_0)$. Here, both $(X_t)_{t \ge 0}$ and $(Y_t)_{t \ge 0}$ are stochastic processes with values in $\R^d$. The Fokker-Planck equation associated with~\eqref{eq:sde full} is given by,
\begin{equation}\label{eq:kappa}
\frac{\partial \kappa}{\partial t} = \div_x(\nabla V(x) \kappa + \beta^{-1} \nabla_x \kappa) + \div_y (\nabla^2 V (x) y \kappa)
\end{equation}
where for any $t \ge 0$, $\kappa(t,x,y)dx dy$ is the law of $(X_t,Y_t)$. Let us now introduce the function $\phi:\R_+ \times \R^d \to \R^d$
\begin{equation}\label{eq:phi}
\phi(t,x) = \int_{\R^d} y \kappa(t,x,y) \, dy.
\end{equation}
The main result of this section is the following proposition.
\begin{proposition}\label{prop:FP1}
The function $\phi$ defined by~\eqref{eq:phi} satisfies the Witten partial differential equation~\eqref{eq:FP1}.
\end{proposition}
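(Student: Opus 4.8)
The plan is to derive the evolution equation for $\phi(t,x) = \int_{\R^d} y\, \kappa(t,x,y)\,dy$ directly from the Fokker–Planck equation~\eqref{eq:kappa} for the augmented dynamics~\eqref{eq:sde full}, by multiplying~\eqref{eq:kappa} by $y$ (componentwise) and integrating over $y \in \R^d$. Concretely, I would fix a component index $j$ and compute $\partial_t \phi_j = \int_{\R^d} y_j\, \partial_t \kappa \, dy$, substituting the right-hand side of~\eqref{eq:kappa}. The computation splits into two pieces according to the two divergence terms: the $\div_x$ term, which only involves $x$-derivatives and so commutes with multiplication by $y_j$ and integration in $y$, and the $\div_y$ term, which requires an integration by parts in $y$.

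For the first piece, $\int_{\R^d} y_j\, \div_x(\nabla V(x)\kappa + \beta^{-1}\nabla_x \kappa)\, dy$, since $x$ and $y$ are independent variables I can pull $\div_x$ outside the $y$-integral and obtain exactly $\div_x(\nabla V(x)\phi + \beta^{-1}\nabla_x \phi)$, which componentwise is $[\cL^* \phi_j]$ with $\beta^{-1} = h/2$ (matching the definition of $\cL^*$ acting on each scalar component $\phi_j$). For the second piece, I integrate by parts in $y$: $\int_{\R^d} y_j\, \div_y(\nabla^2 V(x) y\, \kappa)\, dy = -\int_{\R^d} \nabla_y y_j \cdot (\nabla^2 V(x) y\, \kappa)\, dy = -\int_{\R^d} [\nabla^2 V(x) y]_j\, \kappa\, dy = -[\nabla^2 V(x)\phi]_j$, using $\nabla_y y_j = e_j$ and the decay of $\kappa$ at infinity in $y$ to kill the boundary term. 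Collecting the two pieces gives $\partial_t \phi_j = [\cL^* \phi_j] - [\nabla^2 V \phi]_j = [\tilde \cL^* \phi]_j$ for every $j$, which is precisely~\eqref{eq:FP1}.

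The main obstacle is not algebraic but analytic: one must justify differentiating under the integral sign, the integration by parts in $y$ (i.e. that $\kappa$ and $y\kappa$ decay fast enough in $y$ that boundary terms at infinity vanish), and more basically that $\phi$ in~\eqref{eq:phi} is well defined (that $\kappa$ has a finite first moment in $y$ for all $t$). These integrability properties follow from the structure of the linear equation $dY_t = -\nabla^2 V(X_t) Y_t\, dt$, which, conditionally on the path of $X$, is a linear ODE for $Y_t$, so $Y_t$ has at most exponential growth controlled by the integral of the operator norm of $\nabla^2 V$ along the trajectory; combined with integrability assumptions on the initial law and on $V$ this guarantees the required moments and decay. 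In keeping with the methodological focus of the paper, I would state these as standing assumptions (or sketch them briefly) rather than prove them in full, and present the proof as the formal computation above, which is the heart of the matter.
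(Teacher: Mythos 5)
Your proposal is correct and follows essentially the same route as the paper: multiply the Fokker--Planck equation~\eqref{eq:kappa} by $y_j$, integrate in $y$, pull $\div_x$ out of the integral, and integrate by parts in $y$ using $\nabla_y y_j = e_j$ to produce the $-[\nabla^2 V\,\phi]_j$ term. Your remarks on the analytic justification (moments of $\kappa$, vanishing boundary terms, existence of a density) go slightly beyond the paper's formal computation, and correspond to what the paper addresses in the remark following the proposition via a weak formulation.
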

\begin{proof}
This result follows from a straightforward computation. Indeed, for $i \in \{1, \ldots,d\}$,
\begin{align*}
    \partial_t \phi_i &=
    \int_{\R^d} y_i \partial_t \kappa(t,x,y) \, dy\\
     &=\int_{\R^d} y_i \left( \div_x(\nabla V(x) \kappa + \beta^{-1} \nabla_x \kappa) + \div_y (\nabla^2 V (x) y \kappa) \right) \, dy\\
    &=  \div_x\left(\nabla V(x) \int_{\R^d} y_i \kappa \, dy + \beta^{-1} \nabla_x \int_{\R^d} y_i\kappa \, dy \right) - \int_{\R^d} \nabla_y y_i \cdot (\nabla^2 V (x) y) \kappa \, dy \\
    &=\div_x (\nabla V(x) \phi_i) + \beta^{-1} \Delta_x \phi_i - [\nabla^2 V(x) \phi ]_i 
\end{align*}
which concludes the proof.
\end{proof}

\begin{remark}
\tony{Notice that we have assumed for simplicity that $(X_t,Y_t)$ has a density~$\kappa$ to write the argument above, without making precise assumptions under which this would be true. If it is not the case, one can use a similar reasoning using a weak formulation of the Fokker Planck equation~\eqref{eq:kappa} and of the Witten partial differential equation~\eqref{eq:FP1}. Notice that since the latter is elliptic, it has regularizing effects, which make $\phi$ smooth for any positive time, at least for a smooth potential $V$.}
\end{remark}

Our objective is now to propose Monte Carlo approximations of $\phi(t,x)$, using the probabilistic interpretation~\eqref{eq:phi}. The basic idea is to use many independent realizations $(X^n_t,Y^n_t)_{1 \le n \le N}$ of the dynamics~\eqref{eq:sde full}, and to approximate $\phi(t,x)$ by an empirical distribution:
$$\phi(t,x) \, dx \simeq \frac{1}{N} \sum_{n=1}^N Y^n_t \delta_{X^n_t}(dx),$$
relying on the approximation of the law of $(X_t,Y_t)$ by the empirical distribution \linebreak $\frac{1}{N} \sum_{n=1}^N  \delta_{(X^n_t,Y^n_t)}(dx \, dy)$.
This can be seen as a weighted average of the Dirac masses $\delta_{X^n_t}$, and one therefore expects a large variance if, at a given time $t$, the  vectors $(Y^n_t)_{1 \le n \le N}$ vary a lot. We therefore propose to rewrite this empirical distribution as follows (where $\|.\|$ denotes here and in the following the Euclidean norm):
$$\frac{1}{N} \sum_{n=1}^N Y^n_t \delta_{X^n_t} = \frac{1}{N} \sum_{n=1}^N \|Y^n_t\| \frac{Y^n_t}{\|Y^n_t\|} \delta_{X^n_t}, $$
and to perform resampling using the weights,
$$w^n_t=\|Y^n_t\|,$$
at regular time intervals in order to have a good sampling of the regions associated with large values of $\|Y_t\|$. Indeed, we know from~\eqref{eq:phi} and the discussion at the beginning of this section that these regions (where the support of $\phi$ concentrates) are neighborhoods of index-1 saddle points. This resampling strategy is very much in the spirit of sequential importance sampling~\cite{doucet2001sequential,del2004feynman}.

This leads to the basic algorithm we are using in this work: the Stochastic Saddle Point Dynamics, see Algorithm~\ref{alg:cap}. Notice that here and in the following, we use the notation
$$\mathrm{w}= (w^1, \ldots, w^N ),$$
to denote the vector of the $N$ weights.
The algorithm proceeds as follows: at each iteration~$k$,
\begin{itemize}
    \item If the effective sample size of the weights $\mathrm{w}_k=(w^1_k, \ldots, w^N_k)$ is larger than some threshold $\rho_{ess} N$ ( where $\rho_{ess} \in (0,1)$) then a resampling step is performed;
    \item Perform one step of an Euler-Maruyama discretization of~\eqref{eq:sde full} to update $(X^n_k,Y^n_k,w^n_k)$, 
    ${n=1,\ldots,N}$, using i.i.d. centered reduced Gaussian random variables $(G^n_k)_{ 1 \le n \le N}$. 
\end{itemize}

The effective sample size (ESS) of the weights $\mathrm{w}= (w^1, \ldots, w^n )$ is defined as follows:
$$\text{ESS}(\mathrm w) = \frac{\left(\sum_{n=1}^N w^n\right)^2}{\sum_{n=1}^N (w^n)^2}.$$
This real number lies in the interval $[1,N]$. 
If all the weights are equal, then $\text{ESS}(\mathrm w)=N$, and the sampling scheme collapses to a simple Monte Carlo scheme. If all the weights but one are 0 then $\text{ESS}(\mathrm w)=1$.
The larger the discrepancy between the weights, the smaller the effective sample size.

The resampling routine returns a set of size $N$, by drawing randomly with replacement from $\{1,\ldots,N\}$. We denote the set computed by the resampling routine by $J$. We use the notation $X^{n}\leftarrow X^{J[n]}$ to denote that the $n^{\text{th}}$ particle is replaced by the particle indicated by the $n^{\text{th}}$ entry of $J$. Notice that any resampling scheme could potentially be used. In our numerical experiments, we used stratified resampling \cite{chopin2020introduction}. 
In practice, we found that the proposed scheme is robust to the choice of the resampling scheme and the ESS threshold parameter over a large range of values.

We use a simple explicit Euler-Maruyama scheme for the discretization of the system of stochastic differential equations~\eqref{eq:sde full}.
Other discretization schemes could potentially improve the performance of the algorithm, especially if the problem is not well-conditioned, but the time discretization is not a focus of this paper. In terms of computational requirements we note that the algorithm does not need to compute the \panosb{Hessian} of $V$ since we only need to compute products of the Hessian matrix with vectors. We use an algorithmic differentiation library (see Section \ref{sec:numerical} for details) to compute $\nabla V(x)$ and 
    $\nabla( y^\top V(x))=\nabla^2 V(x)y$ at the same time with a computational cost of $O(d)$ for each particle.  
    We also note that the only part of the algorithm that cannot be parallelized is the resampling step (which is negligible in terms of computational cost).
    Thus the algorithm lends itself to an embarrassingly parallel implementation (which we fully take advantage~of).

\begin{algorithm}
\caption{Stochastic Saddle Point Dynamics (SSPD)}\label{alg:cap}
\begin{algorithmic}[1]
\Require $K>1$, $\delta >0$, $N\geq 1$, $\mathrm X_0$, $\mathrm Y_0$, $\rho_{ess}\in(0,1)$ 
\For{$k=0,\ldots,K$}
\State $w^n_{k}=\|Y^n_{k}\|$ \hspace{5.8cm} $n=1,\ldots,N$ 
\If{$\text{ESS}(\mathrm w_{k})\leq \rho_{ess} N$}
 \hspace{3.5cm}\text{\Comment{Run resampling algorithm}} 
\begin{align*}
&J\leftarrow {\tt resample}(\mathrm w_k)& \ & \  \ \ \  \text{\Comment{Resample particles}} \\
 & X^n_k\leftarrow X_k^{J[n]}& \ & \  \ \ \ n=1,\ldots,N \\
& Y^n_k \gets Y^{J[n]}_k/w^{J[n]}_k & \ & \  \ \ \ n=1,\ldots,N 
 \end{align*}
\EndIf 
\For{$n=1,\ldots,N$} 
\State 
$X^n_{k+1}= X^n_{k}-\delta \, \nabla V (X^n_{k}) +\sqrt{2\beta^{-1} \delta} \, G^n_k$ \hspace{0.8cm}\text{\Comment{Update States}}
\State $Y^n_{k+1}= Y^n_{k}-\delta \, \nabla^2 V (X^n_{k}) Y^n_k $ 
\EndFor
\EndFor
\end{algorithmic}
\end{algorithm}
\begin{figure}[ht]
\centering     
\subfigure[]{\label{fig:gd example 1}
\includegraphics[width=0.45\textwidth]{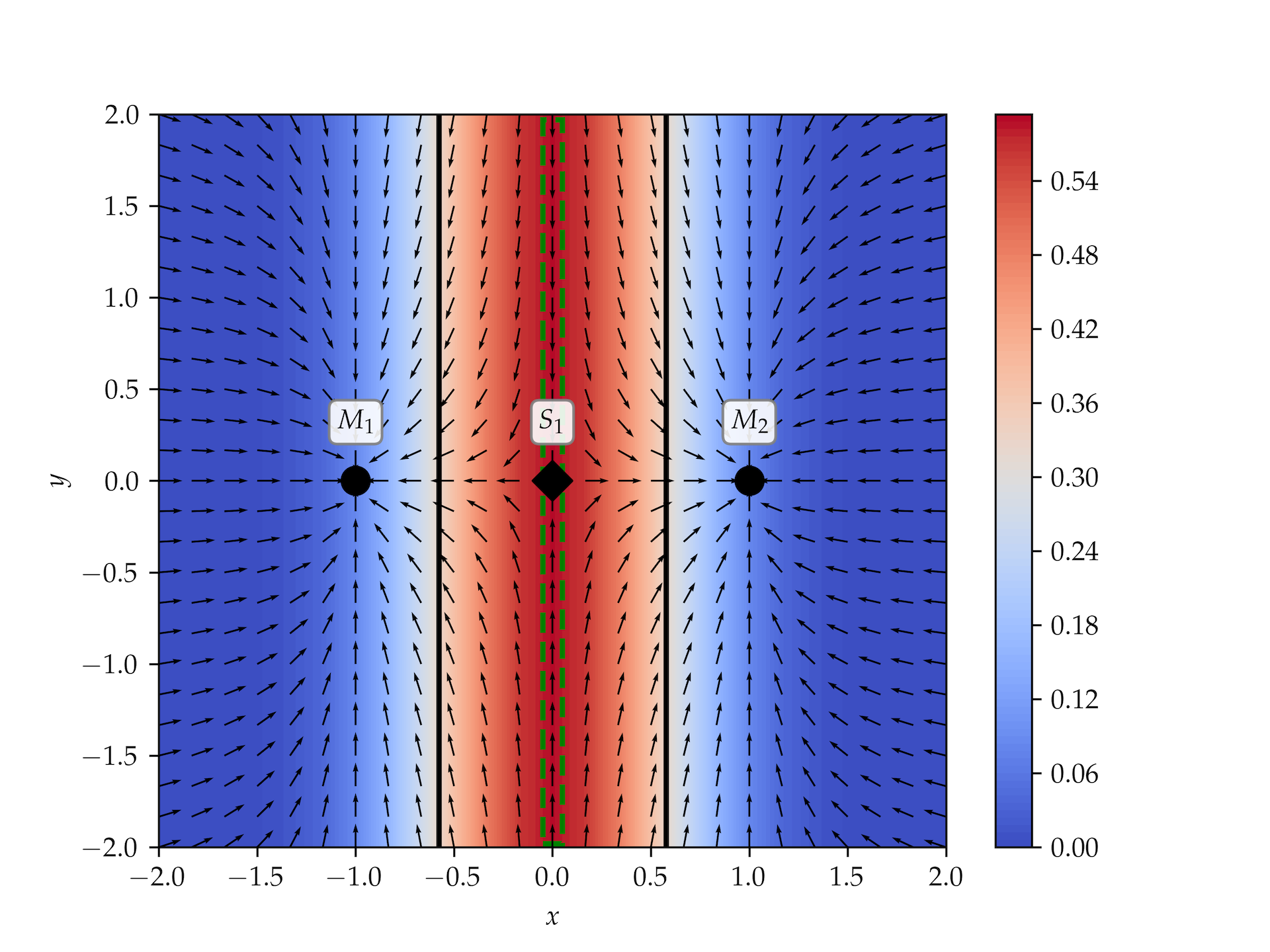}}
\subfigure[]{\label{fig:idimer example 1}
\includegraphics[width=0.45\textwidth]{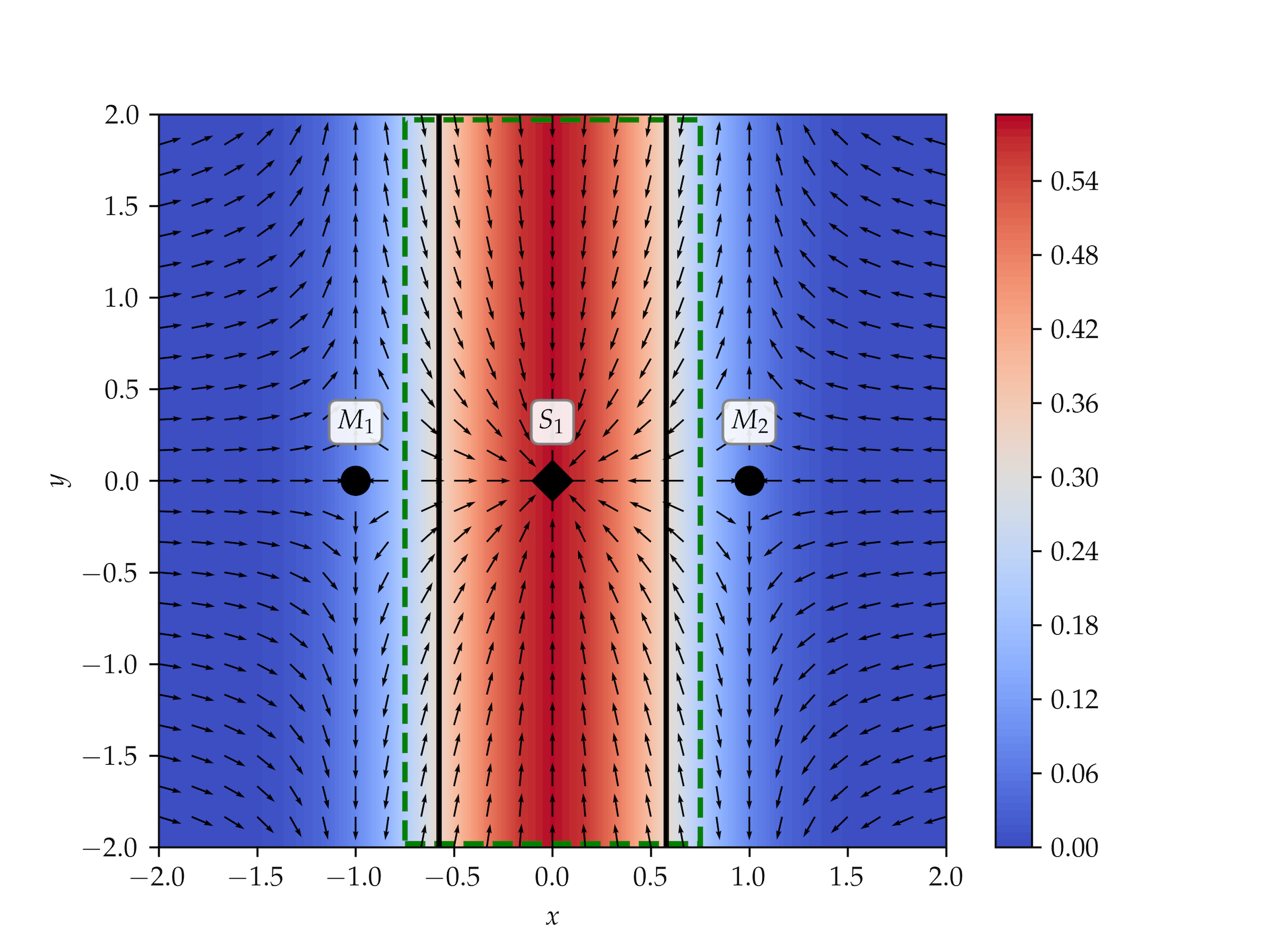}}
\caption{See Example~\ref{example: toy2d}. The heatmap represents the resampling weights from Algorithm 1. The arrows on Figure (a) shows the vector field $-\nabla V$, and on Figure (b) the vector field associated with the idealized dimer dynamics~\eqref{eq: ideal dimer}. 
}
\end{figure}
\begin{example}\label{example: toy2d}
In order to illustrate the role of the weights $w^n_t=\|Y^n_t\|$, let us consider a simple example.
In Figure \ref{fig:gd example 1} we plot the gradient vector field of the double-well potential, $$V(x,y)=(1-x^2)^2+2y^2,$$ that has two global minima at $[\pm 1,0]$ marked by $M_1$ and $M_2$ respectively and a  saddle point at $[0,0]$ marked by $S_1$. 
The Hessian of this function is positive definite everywhere, apart from when $-1/\sqrt3\leq x \leq 1/\sqrt3$ (the boundary of this region is indicated by the two vertical black lines). Starting from $Y_0=[1,1]^\top$ and an arbitrary initial condition $X_0$, and noting that the Hessian is a diagonal matrix, the weight of particle $i$ at time $\delta$ is,
\BE
w^i_\delta (X_0)=\|Y^i_\delta\|= \|\exp(-\nabla^2 V(X_0)\delta)Y^i_0\|+O(\delta^2).\label{eq:approx weights}
\EE
The heatmap in Figure \ref{fig:gd example 1} represents the weights as given by   \eqref{eq:approx weights} which is valid for a small enough step-size $\delta$. We thus see, as expected, that when the particle visits a region with a positive eigenvalue then its weight will be reduced, whereas regions with negative eigenvalues increase the weight of the particle. After the resampling step, particles in the red region
are more likely to survive than particles in the blue region.
In Figure \ref{fig:gd example 1} we also show the vector fields of the negative gradient, i.e. the deterministic part of the dynamics on $X_t$ in~\eqref{eq:sde full}.
This figure captures the intuition behind Algorithm \ref{alg:cap} and the dynamics in \eqref{eq:sde full}. The first equation in \eqref{eq:sde full} drives the particles towards local minima, while the resampling scheme favors the particles that explore regions of the space likely to have saddle points. 
\end{example}
\section{An algorithm combining global search with the Witten Laplacian on \texorpdfstring{\(1-\)}{1-}forms and local search with the dimer method}\label{sec:algo}

The Example~\ref{example: toy2d} is encouraging because it shows that the weights of the resampling scheme have the correct behavior. 
Actually, we will check below that the algorithm indeed works as expected for this simple example (see Section \ref{sec:double well}).

However, for larger dimensional problems, we have observed that it is difficult to tune the parameter~$\beta$ to make
Algorithm~\ref{alg:cap} effective. Indeed, on the one hand, from a theoretical viewpoint (see Section~\ref{sec:1form}), one requires $\beta$ to be as large as possible in order to concentrate the solution to the Witten partial differential equation~\eqref{eq:FP1} on the saddle points. But on the other hand, from a numerical viewpoint, for large $\beta$, the dynamics on $X_t$ in \eqref{eq:sde full} very quickly brings the particles to local minima of $V$, and the resampling strategy has difficulties to keep the index-1 saddle points populated. Therefore, we propose to use Algorithm~\ref{alg:cap} with a not too large $\beta$, in order for the selection mechanism to keep particles around index-1 saddle points, and in a post-processing step, to periodically perform a local saddle point search method~\cite{henkelman1999dimer,zhang2012shrinking,zhang2016optimization} on the particles to precisely locate the saddle points\footnote{It would be worth exploring another natural option to address this difficulty, namely using an annealing schedule, with a time-increasing $\beta$. This is out of the scope of this work.}. 
In practice, we use this local saddle point search only on particles which are in a region where exactly one eigenvalue of the Hessian of $V$ is negative, and the other ones are positive, since we know that local saddle point search algorithms may not converge when initialized far away from saddle points (this will be discussed below). For later use, we denote this so-called index-1 region by
\BE\label{eq:define set S}
S=\{x\in\R^d~|~ \lambda_1(x)<0<\lambda_2(x)\}
\EE
where $\lambda_1(x)\leq \lambda_2(x)\leq \ldots\leq \lambda_d(x)$ are the ordered eigenvalues of the Hessian of~$V$ at point~$x$, and where ties are resolved arbitrarily.

We use the dimer method~\cite{henkelman1999dimer,zhang2012shrinking,zhang2016optimization} as the local index-1 saddle point search (of course, our algorithm could be combined with any other local saddle point search algorithm, such as a Newton method for example).
The details of the dimer algorithm are presented in Section~\ref{sec:algo1_dimer}, while a variant of it, more suitable to situations with many saddle points, is introduced in Section~\ref{sec:algo1_pdimer}. We postpone numerical illustrations of these algorithms to Section~\ref{sec:numerical}.

\subsection{Combining Algorithm~\ref{alg:cap} with the dimer method}\label{sec:algo1_dimer}

Let us briefly introduce the dynamics of the dimer method~\cite{henkelman1999dimer},
basing our discussion on the idealized dimer (i.e. the original method but in continuous time and in the limit of a dimer of zero length~\cite{levitt2017convergence}). The dimer dynamics on $t \mapsto u(t) \in \mathbb R^{d}$ are given by:
\BE\label{eq: ideal dimer}
\deriv{u}{t}=-(I_d-2v_1(u) v_1(u)^\top)\nabla V(u),
\EE
where, for any $u \in \mathbb R^d$, $v_1(u)$ is the normalized eigenvector associated with the lowest eigenvalue of~$\nabla^2 V(u)$. Here and in the following, $I_d$ denotes the $\mathbb R^{d \times d}$ identity matrix. It has been shown in~\cite[Theorem 2.2]{levitt2017convergence} that the dynamics~\eqref{eq: ideal dimer} converge to an index-1 saddle point if the initial condition is close enough to a saddle point. 
Returning to Example~\ref{example: toy2d}, we see in Figure~\ref{fig:idimer example 1} that the set of  initial conditions for which \eqref{eq: ideal dimer} converges to the saddle point $S_1$ actually covers the region where there is a single negative eigenvalue of the Hessian of $V$. To obtain an implementable algorithm, we simply perform an explicit Euler discretization of \eqref{eq: ideal dimer}. Notice that there are potentially better algorithms than this simple explicit discretization of \eqref{eq: ideal dimer} for the local search step, see e.g. \cite{zhang2016optimization,barkema2001activation,gould2016dimer}. 

We are now in position to introduce the Stochastic Saddle Point Synamics with Local Search, see Algorithm~\ref{alg:capl}. We use the notation $\mathrm X=(X^1, \ldots, X^N)$ to denote the position of the $N$ particles in a single vector $\mathrm X \in \R^{Nd}$. We use
the set $\mathrm S(\mathrm X)=\{n~|~ X^n\in S \} \subset \{1, \ldots, N\}$
to denote all the indices of the particles which are at a point where the Hessian of $V$ has exactly one negative eigenvalue (see the definition~\eqref{eq:define set S} of $S$). If $\mathrm S(\mathrm X)$ is empty then we do not perform a local search. If $\mathrm S(\mathrm X)$ has more than one particle then we pick the one with the highest weight. Algorithm \ref{alg:capl} describes the procedure in full.
The main difference with Algorithm~\ref{alg:cap} is on lines 3-4 where every $m$ iterations, we look for a saddle point by launching the dimer search from a promising initial condition chosen among the particles, namely one with the largest weight among those which are in the index-1 region~$S$. 

For completeness we also specify the dimer algorithm in Algorithm \ref{alg:dimer}.
There are two reasons why we only consider points in the index-1 region $S$ to initialize the dimer, and why we stop the dimer algorithm when it leaves $S$: (i) according to~\cite[Theorem 2.2]{levitt2017convergence}, when $u(t)$ leaves the index-1 region, there is no guarantee of convergence to a saddle point and (ii) outside an index-1 region, the dimer method may converge to points which are not saddle points, see for example the point denoted by $S_1$ on the example~\cite[Figure 2]{levitt2017convergence}.

Notice that there are variants of the dimer algorithm~\cite{gao2015iterative,gould2016dimer} where it is even not required  to form the Hessian of $V$ or compute its eigenvalues. On the numerical examples we consider in this work, we did not have to implement these variants, but they may be useful in particular for very high-dimensional problems.

\begin{algorithm}
\caption{Stochastic Saddle Point Dynamics with Local Search (SSPD-LS)}\label{alg:capl}
\begin{algorithmic}[1]
\Require $K>1$, $\delta >0$, $m\geq 1$, $N\geq 1$, $\mathrm X_0$, $\mathrm Y_0$, $\rho_{ess}\in(0,1)$ 
\State $I=\emptyset$
\For{$k=0,\ldots,K$}
\If{$k \mod m =0$ and  $\mathrm S(\mathrm X_k)\neq \emptyset$}
\hspace{4cm} \  \text{\Comment{\tt Local Search}}
\State Add to $I$ the output of Algorithm~\ref{alg:dimer} initialized with $u_0=X^{l}_k$ \newline
\hspace*{1.1cm}  where 
$l=\argmax\limits_{n=1,\ldots,N}\{w^n_k~|~ n\in \mathrm S(\mathrm X_k) \}$, if this output is not already in $I$
\EndIf
\State $w^n_{k}=\|Y^n_{k}\|$ \hspace{5.8cm} $n=1,\ldots,N$ 
\If{$\text{ESS}(\mathrm w_{k})\leq \rho_{ess} N$}
 \hspace{3.5cm}\text{\Comment{Run resampling algorithm}} 
\begin{align*}
&J\leftarrow {\tt resample}(\mathrm w_k)& \ & \  \ \ \  \text{\Comment{Resample particles}} \\
 & X^n_k\leftarrow X_k^{J[n]}& \ & \  \ \ \ n=1,\ldots,N \\
& Y^n_k \gets Y^{J[n]}_k/w^{J[n]}_k & \ & \  \ \ \ n=1,\ldots,N 
 \end{align*}
\EndIf 
\For{$n=1,\ldots,N$} 
\State 
$X^n_{k+1}= X^n_{k}-\delta \, \nabla V (X^n_{k}) +\sqrt{2\beta^{-1} \delta} \, G^n_k$ \hspace{0.8cm}\text{\Comment{Update States}}
\State $Y^n_{k+1}= Y^n_{k}-\delta \, \nabla^2 V (X^n_{k}) Y^n_k $ 
\EndFor
\EndFor
\State \Return $I$
\end{algorithmic}
\end{algorithm}

\begin{algorithm}
\caption{Dimer Algorithm}\label{alg:dimer}
\begin{algorithmic}[1]
\Require $u_0$, $\delta>0$, $K_d>1$, $\epsilon_d>0$
\For{$k=0,\ldots,K_d$}
 \If {$\lambda_1(u_k)<0<\lambda_2(u_k)$ and $\|\nabla V(u_{k})\|<\epsilon_d$}  \Return $u_k$ 
\Comment{Success}
\ElsIf {$\lambda_1(u_k)>0$} \Return $\emptyset$
\Comment{Failure}
\Else \ $u_{k+1}=u_k-\delta (I_d-2 v_1(u_k) v_1(u_k)^\top)\nabla V(u_k)$
 \EndIf
\EndFor
\State \Return $\emptyset$ \Comment{Failure}
\end{algorithmic}
\end{algorithm}

\subsection{The particle dimer algorithm}\label{sec:algo1_pdimer}

Notice that Algorithm \ref{alg:capl} actually generates potentially more than one promising initial condition for the dimer algorithm. 
Indeed, at any iteration~$k$ such that $k \ \textrm{mod} \  m =0$, one can consider as initial conditions more than one particle in $\mathrm S(\mathrm X_k)$.

The conventional dimer method only evolves a single particle, and in  Algorithm \ref{alg:capl}, we only use the particle with the highest weight as an initial condition for the dimer method.
With this approach,  the algorithm tends to find a single saddle point associated with the particle with the largest weight, whereas one often wants to find a large number of saddle points, to get a complete picture of the energy landscape. To overcome this issue, one idea would be to lower the weights of particles which lie in basins of attraction (for the dimer dynamics) of
the previously visited index-1 saddle points. But it appears that those basins of attractions are difficult to identify, especially in high dimension.

To address this issue, we propose the following particle variant of the dimer dynamics on $t \mapsto \mathrm u(t) \in \mathbb R^{Md}$, where $M$ is a positive integer:
\BE\label{eq:dimer concensus}
\dot{\mathrm u}=-(I_{Md}-2\mathrm v_1(\mathrm u)\mathrm v_1(\mathrm u)^\top)\nabla \mathrm V(\mathrm u)
- \Lambda (\mathrm u_0 ) \mathrm u,
\EE 
with initial condition $\mathrm u(0)=\mathrm u_0$.
For a given $\mathrm X \in \mathbb R^{Nd}$ and associated weights $\mathrm w \in \R^N$, the initial condition $u_0$ is provided by stacking the $M$ particles in $\mathrm S(\mathrm X)$ with the largest weights into an $\R^{Md}$ vector. Here, $M$ is an additional parameter fixing the maximum number of initial conditions (of course, if $M$ is larger than the cardinality of $\mathrm S(\mathrm X_k)$, then one simply uses all the particles in $\mathrm S(\mathrm X_k)$ as initial conditions). The following notation is used in~\eqref{eq:dimer concensus}: $\mathrm u=[(u^1)^\top,\ldots,(u^M)^\top]^\top$,
$\mathrm v_1(\mathrm u)=[v_1(u^i)^\top,\ldots,v_1(u^M)^\top ]^\top$ and 
$\nabla{\mathrm V}(\mathrm u)=[\nabla V(u^1)^\top,\ldots,\nabla V(u^M)^\top]^\top$. The matrix $\Lambda(\mathrm u_0 )\in\R^{Md\times Md}$ is the Laplacian matrix associated with a graph $\mathcal G=(\mathcal V,\mathcal E)$ whose construction we describe below.

The vertex set $\mathcal V$ of the graph are the $M$ particles $(u_0^m)_{1\le m \le M}$. For $1 \le i,j \le M$, the $(i,j)^{\text{th}}$ entry of the weighted adjacency matrix $W$ is given by, for some positive cut off parameter $r>0$,
\BE\label{eq:kernel cuttoff}
W_{ij}=
K(u^0_i,u^0_j) 1_{K(u^0_i,u^0_j)<r } 
\EE
where $K:\R^d\times\R^d\rightarrow \R_+$ is a kernel designed to measure the distance between particle~$i$ and particle~$j$. If the parameter $r$ is chosen to be small, then only particles that are very close will interact. In the extreme case when $r=0$, the dynamics in \eqref{eq:dimer concensus} reduce to $M$ independent runs of Algorithm \ref{alg:dimer} with different initial conditions.
The disadvantage of choosing a small $r$ is that many particles tend to visit areas that may be far away from any saddle point and hence running $M$ independent runs of Algorithm \ref{alg:dimer} means that the time to convergence is determined by the worst initial condition.   
On the other hand if a very large~$r$ is chosen, then all the particles will interact:
if the particles are in the neighborhood of different saddles then this may prevent the algorithm to converge.  In our numerical experiments we used the Gaussian kernel
$K(x,y)=\exp(-\|x-y\|^2/\sigma)$ (see Section \ref{sec:numerical} for more details). While the choice of the kernel and  of the parameter $r$ parameter may have in principle a significant impact on the performance of the algorithm, we found that the algorithm is not very sensitive to this choice over a relatively large range of values of $\sigma$ and $r$ in the numerical tests we considered in Section~\ref{sec:numerical}. For this graph $\mathcal G$ and weighted adjacency matrix $W$,
the graph-Laplacian matrix is defined by
\[
\Lambda (\mathrm u_0)= I_{d}\otimes(D-W),
\]
where $D=\text{diag}(d_1,\ldots,d_M$), with $d_i=\sum\limits_{j=1}^M W_{ij}$. For $1 \le i \le M$, the $i$-th element of $\Lambda (\mathrm u_0)u$ is a vector in $\mathbb R^d$ defined by $\sum_{j=1}^M W_{ij} (u_i-u_j)$. We note that, by construction,  each row of $\Lambda$ sums to $0$. In addition $\Lambda u=0$ if and only if $u^i=u^j$, for $(i,j)\in \mathcal E$. The construction of such a graph-Laplacian matrix above is standard (see e.g. \cite{motsch2014heterophilious}). 

The obtained variant of the dimer algorithm is summarized in Algorithm~\ref{alg:particle dimer} and called in the following the particle dimer algorithm. It will be used in the local search step of Algorithm~\ref{alg:capl} (see lines 3-4) in place of the previously introduced dimer algorithm (Algorithm~\ref{alg:dimer}), the initial condition $\mathrm u_0 \in \mathbb R^{M_k d}$ being the $M_k$ particles in $\mathrm S(\mathrm X_k)$ with the $M_k$ largest weights, where $M_k=\min(M,|\mathrm S(\mathrm X_k)|$) ($M>0$ is thus the maximum number of particles considered in the particle dimer algorithm).

\begin{remark} The particle dimer dynamics \eqref{eq:dimer concensus} are merely extensions of the idealized dimer dynamics \eqref{eq: ideal dimer} with the additional linear term $\Lambda(u_0)u$ enforcing consensus, in the spirit of consensus based optimization algorithms. 
We derived the particle dimer algorithm drawing inspiration from the decentralized gradient descent method \cite{yuan2016convergence}. \panosb{The objective of the consensus term is to avoid  slow convergence because of some initial conditions for which the dimer algorithm does not converge quickly. Notice however that this consensus term only introduces local interactions between particles, so that the particle dimer is indeed able to locate many saddle points, when the initial conditions are  in the neighborhoods of different saddle points.
The variety in the initial points is effectively managed through the sampling mechanism outlined in Algorithm \ref{alg:capl}.}

\end{remark}

\begin{algorithm}
\caption{Particle Dimer Algorithm}\label{alg:particle dimer}
\begin{algorithmic}[1]
\Require $\mathrm u_0$, $\delta>0$, $K_d>1$, $\epsilon_d>0$,  $M>0$
\For{$k=0,\ldots,K_d$}
 \If {$\forall \ m=1\ldots,M$,\,  $\lambda_1(u^m_k)<0<\lambda_2(u^m_k)$ and $\|\nabla V(u^m_{k})\|<\epsilon_d$ } 
\State \Return $\mathrm u_k$ \Comment{Success}
\ElsIf {${\min\limits_{m=1\ldots,M} \lambda_1(u^m_k)}>0$} \Return $\emptyset$ \Comment{Failure}
 \Else \ $\mathrm u_{k+1}=\mathrm u_k-\delta(I_{Md}-2 \mathrm v_1(\mathrm u_k) \mathrm v_1(\mathrm u_k)^\top)\nabla \mathrm V(\mathrm u_k)
 -\delta \Lambda (\mathrm u_0)\mathrm u_k $
  \EndIf
\EndFor
\State  \Return $\emptyset$ \Comment{Failure}
\end{algorithmic}
\end{algorithm}

\section{Numerical experiments}\label{sec:numerical}
In this section, we report on numerical experiments performed on several benchmark problems. 
The results presented in Sections \ref{sec:double well}, \ref{sec: muller brown}, and \ref{sec:challenge} essentially illustrate that the proposed algorithms behave as expected from the theoretical derivations above. 
To support our claims, we run Algorithm \ref{alg:cap} and its variant Algorithm \ref{alg:capl} (where the local search is either performed by Algorithm \ref{alg:dimer} or \ref{alg:particle dimer}) on simple 2d examples. 
These experiments essentially aim at demonstrating the proposed methodology visually. 
Section \ref{sec:vac diffusion} then reports on results from a standard problem in molecular physics (vacancy diffusion) and illustrates that the computational cost of the algorithm increase linearly with the dimension of the configuration space. 
Finally, in Section \ref{sec:lj7}, we present results obtained on the 7-atom Lennard Jones potential in 2d.

We implemented\footnote{The code is available from the web-page of the second author.} the algorithms in Python 3.1 using the JAX-MD library~\cite{schoenholz2021jax}. 
JAX-MD is a Python library that provides primitive functions, operations and automatic differentiation for physical simulations.
It also allows Just-In-Time compilation to CPU, GPU and TPU. 
Code written in JAX-MD tends to be slower than hand written code in~C or~Fortran.
However, the added flexibility, code readability and ease of implementation outweigh the small efficiency \panosb{overhead}. 
We performed the experiments on an e2-standard-16 machine on the Google cloud. These machines have an Intel 2.0 GHz processor with 28 cores, 16 vCPUs, and 64GBs memory. The code is available from the webpage of the second author.

We note that, when comparing the performances of Algorithm~\ref{alg:capl} combined with Algorithm~\ref{alg:dimer}, and  Algorithm~\ref{alg:capl} combined with Algorithm~\ref{alg:particle dimer}, we use the same seed so that the swarms of particles evolve in the same way in both cases.
Thus the results only differ in the local saddle point search initialized from this ensemble of particles. 

The main numerical parameters we used are provided in Table \ref{table:parameters}. 
For Algorithm \ref{alg:cap}, the most important parameters are $\delta$ (time discretization parameter used in the Euler-Maruyama discretization of~\eqref{eq:sde full}) and $\beta^{-1}$ (temperature).
 We observed that our results were not very sensitive to the temperature parameter $\beta^{-1}$ chosen in the range $[0.01,0.1]$ (see a discussion in Section~\ref{sec:challenge}): smaller temperatures typically slows down the dynamics, but leads to clouds of particles which are more concentrated on the saddle points. The upper bound on $\beta^{-1}$ is typically driven by physics: for too large temperatures, the molecular dynamics become unstable. The lower bound on $\beta^{-1}$ influences the performance of our algorithm: for too small temperatures, the selection mechanism is difficult to tune in order to counter-act the steepest decent dynamics. 

Notice that we used in Algorithm~\ref{alg:dimer} and Algorithm~\ref{alg:particle dimer} the same parameter $\delta$ to discretize in time the dynamics~\eqref{eq: ideal dimer} and~\eqref{eq:dimer concensus} as the one given in Table \ref{table:parameters}. The accuracy 
threshold $\epsilon_d$ chosen for Algorithm~\ref{alg:dimer} and Algorithm~\ref{alg:particle dimer} was $10^{-6}$. 
For Algorithm~\ref{alg:particle dimer} we used $M=N=2000$, except for the high-dimensional potentials from Section \ref{sec:vac diffusion} and Section \ref{sec:lj7} where we used $M=100$. 
We used $K=10000$ as the maximum number of iterations for Algorithm~\ref{alg:cap} and Algorithm~\ref{alg:capl}. 
The maximum number of iterations in Algorithm~\ref{alg:dimer} and Algorithm~\ref{alg:particle dimer} is fixed to $K_d=1000$.
Finally, in the construction of the graph-Laplacian matrix, we used a Gaussian kernel (with $\sigma=1$) and the cut-off parameter in \eqref{eq:kernel cuttoff} was set to $r=0.9$.

\begin{table}[]
\begin{center}
\begin{tabular}{|l|ccccc|}
\hline
Section (Potential) & $\delta$ & $\beta^{-1}$ & $\rho_{ess}$ & $m$ & $N$  \\ \hline
  \ref{sec:double well} (Double-Well, Example \ref{example: toy2d}) &  $10^{-4}$& 0.1& 0.99&  10& 5000 \\
   \ref{sec: muller brown} (M\"uller-Brown)   & $10^{-3}$& 0.05& 0.95&  10& 2000   \\
   \ref{sec:challenge}  (2D Potential, \eqref{eq: challenging potential})  &  $10^{-3}$& [0.01,0.1]& 0.95&  10& 2000   \\
    \ref{sec:vac diffusion} (Vacancy Diffusion)  &  $10^{-4}$& 0.1& 0.99&  100& 2000  \\
   \ref{sec:lj7}  (Lennard-Jones)   &  $10^{-4}$& 0.1& 0.99&  100& 2000  \\ \hline
\end{tabular}
\end{center}
\caption{\label{table:parameters}Main parameters for Algorithms~\ref{alg:cap} and~\ref{alg:capl}.}
\end{table}

\subsection{Double-well potential}\label{sec:double well}
In this section we use the double-well potential from Example~\ref{example: toy2d} to illustrate numerically how Algorithm~\ref{alg:cap} performs.
Figure \ref{fig:double well t1} represents the particles after $k=10$ steps. The color of one particle represents its weight (the weights are scaled to be between 0 and 1: blue represents low weights and red particles have high weights). The arrow in the figure represents the direction provided by the following weighted average,
\[
\overline{Y}_k = \frac{\sum_{n=1}^N w^n_k Y^n_k}{\|\sum_{n=1}^N w^n_k Y^n_k\|}.
\]
The origin for $\overline{Y}_k$ is given by,
 \[
\overline{X}_k = \frac{\sum_{n=1}^N w^n_k X^n_k}{\sum_{n=1}^N w^n_k}.
\]
The grey shaded region represents the region where one eigenvalue of the Hessian of $V$ is positive and the other one is negative. After \panosb{$k=200$} time steps we see in Figure~\ref{fig:double well t200} that the particles move towards the shaded grey area. As expected, the particles inside the grey area have higher weights than the particles outside this region.
Figure \ref{fig:double well t400} shows the state of the algorithm after  \panosb{$k=400$} iterations. Clearly, a stationary state has been attained for Algorithm~\ref{alg:cap}, and the particles are confined to a region around the saddle point ($\overline{X}_{400}$ is approximately the saddle point).

\begin{table}
\begin{center}
\begin{tabular}{ |c|c|c|c| } 
 \hline 
 Label & Energy & $x_1$ & $x_2$ \\ 
  \hline
$C_1$ &-146.700 & -0.558&1.442\\ 
$C_2$ &-80.768  & -0.05 & 0.467 \\
$C_3$ & -108.167& 0.623 & 0.028\\ 
$T_1$ & -40.665 & -0.822& 0.624 \\
$T_2$ & -72.249 & 0.212 & 0.293 \\
 \hline
\end{tabular}
\end{center}
\caption{\label{table: muller potential}Minima and saddle points of the M\"uller-Brown potential.}
\end{table}
\begin{figure}
\centering     
\subfigure[$k=10$]{\label{fig:double well t1}\includegraphics[width=0.325\textwidth]{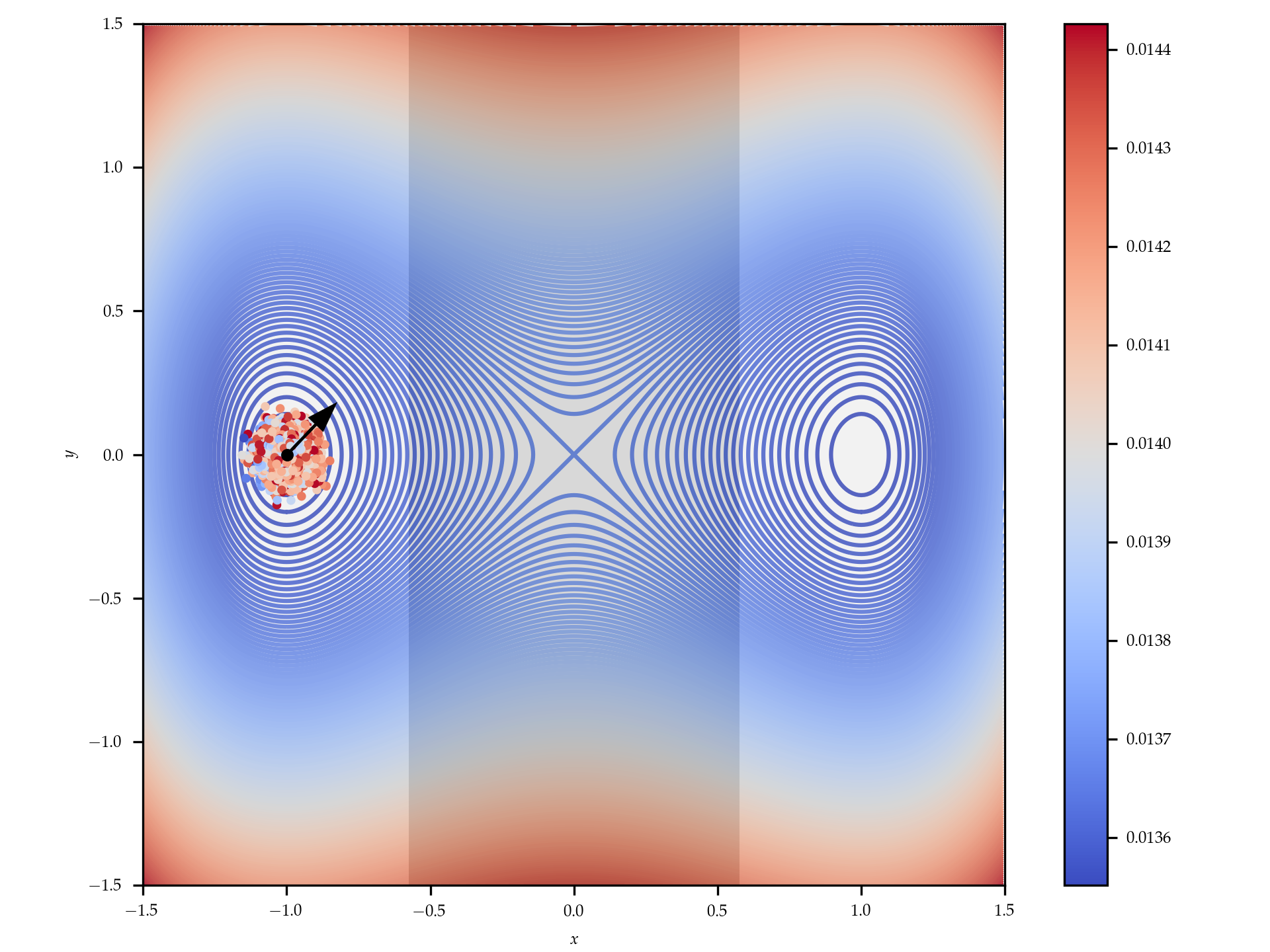}}
\subfigure[$k=200$]{\label{fig:double well t200}\includegraphics[width=0.325\textwidth]{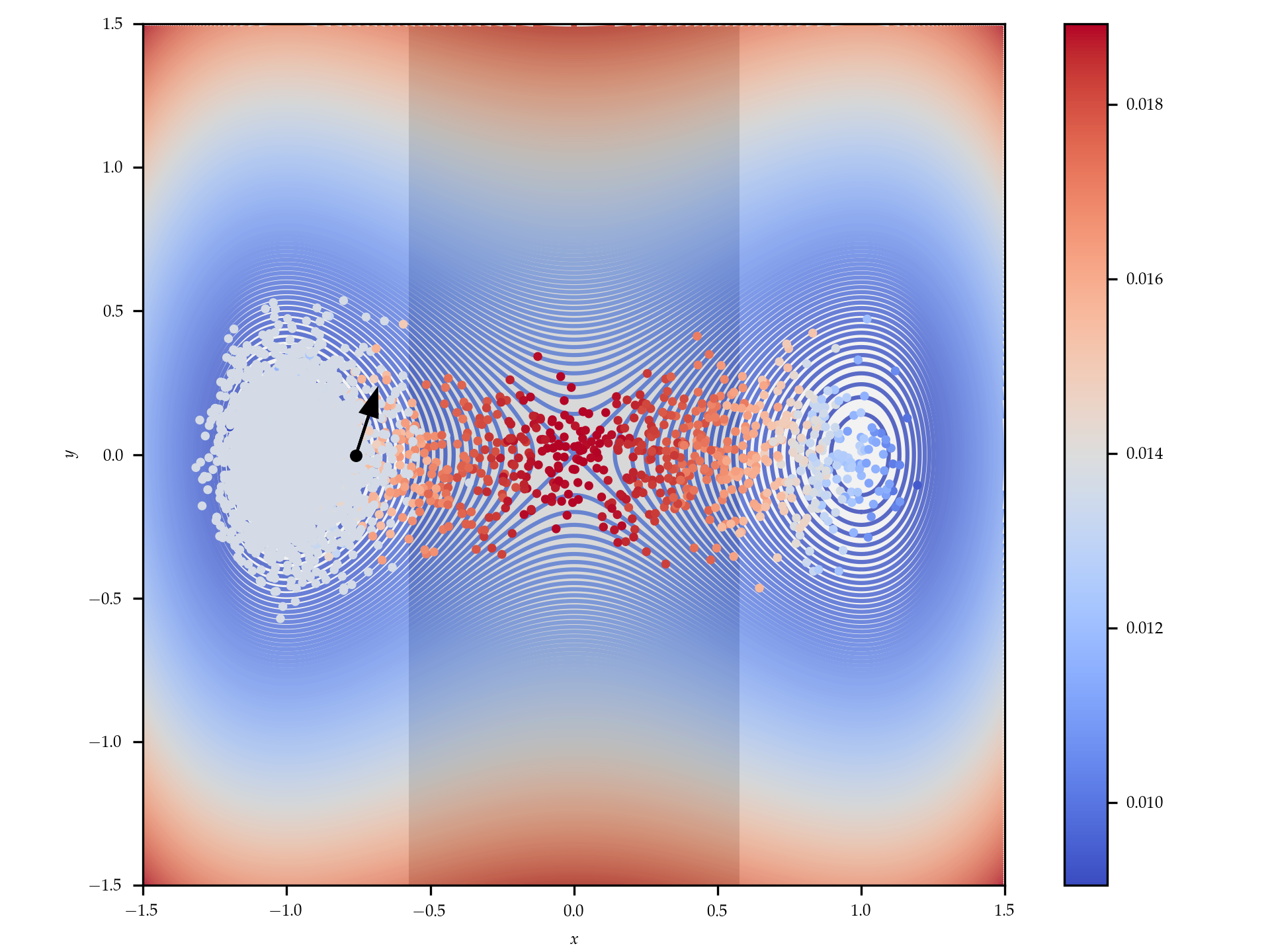}}
\subfigure[$k=400$]{\label{fig:double well t400}\includegraphics[width=0.325\textwidth]{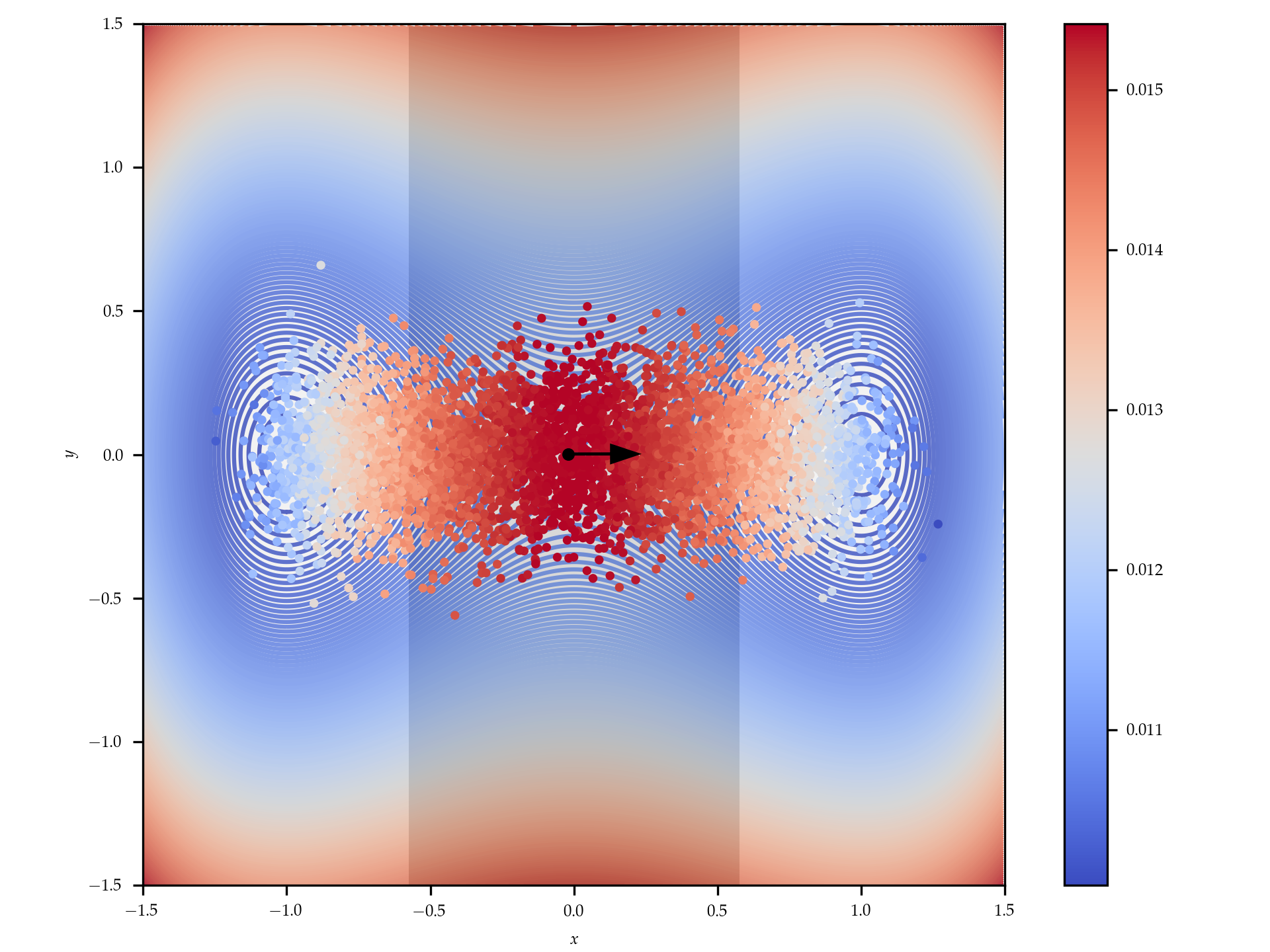}}
\caption{Double-Well Potential. An animation associated with this Figure is available in the supplementary material.}
\label{fig:double well animation}
\end{figure}
\begin{figure}
\centering     
\subfigure[M\"uller-Brown Potential]{\label{fig:muller brown}\includegraphics[width=0.295\textwidth]{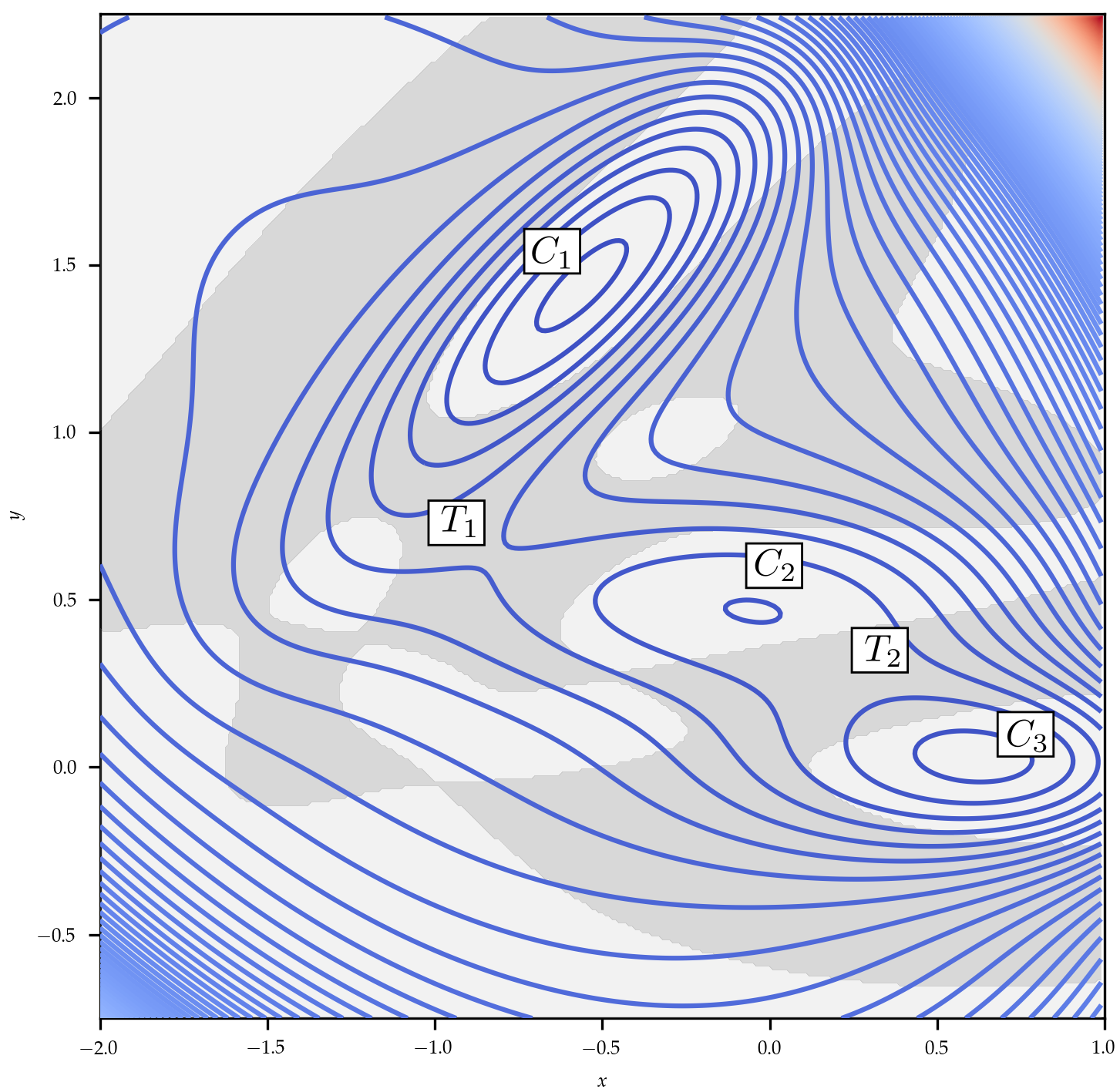}}
\subfigure[$k=1000$, local search with Algorithm \ref{alg:dimer}  ]{\label{fig:muller brown vanilla}\includegraphics[width=0.34\textwidth]{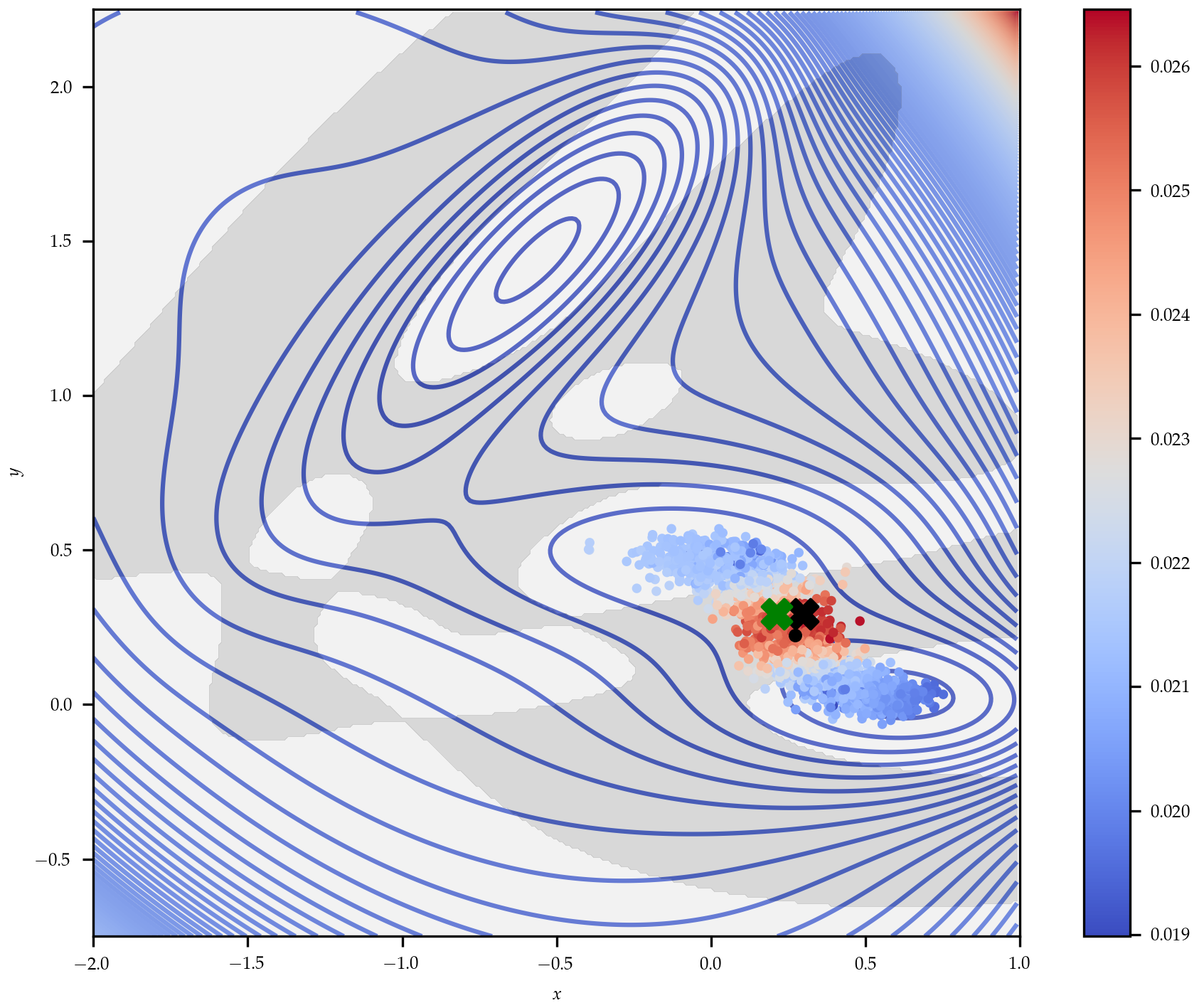}}
\subfigure[$k=10$, local search with Algorithm \ref{alg:particle dimer}]{\label{fig:muller brown particle}\includegraphics[width=0.34\textwidth]{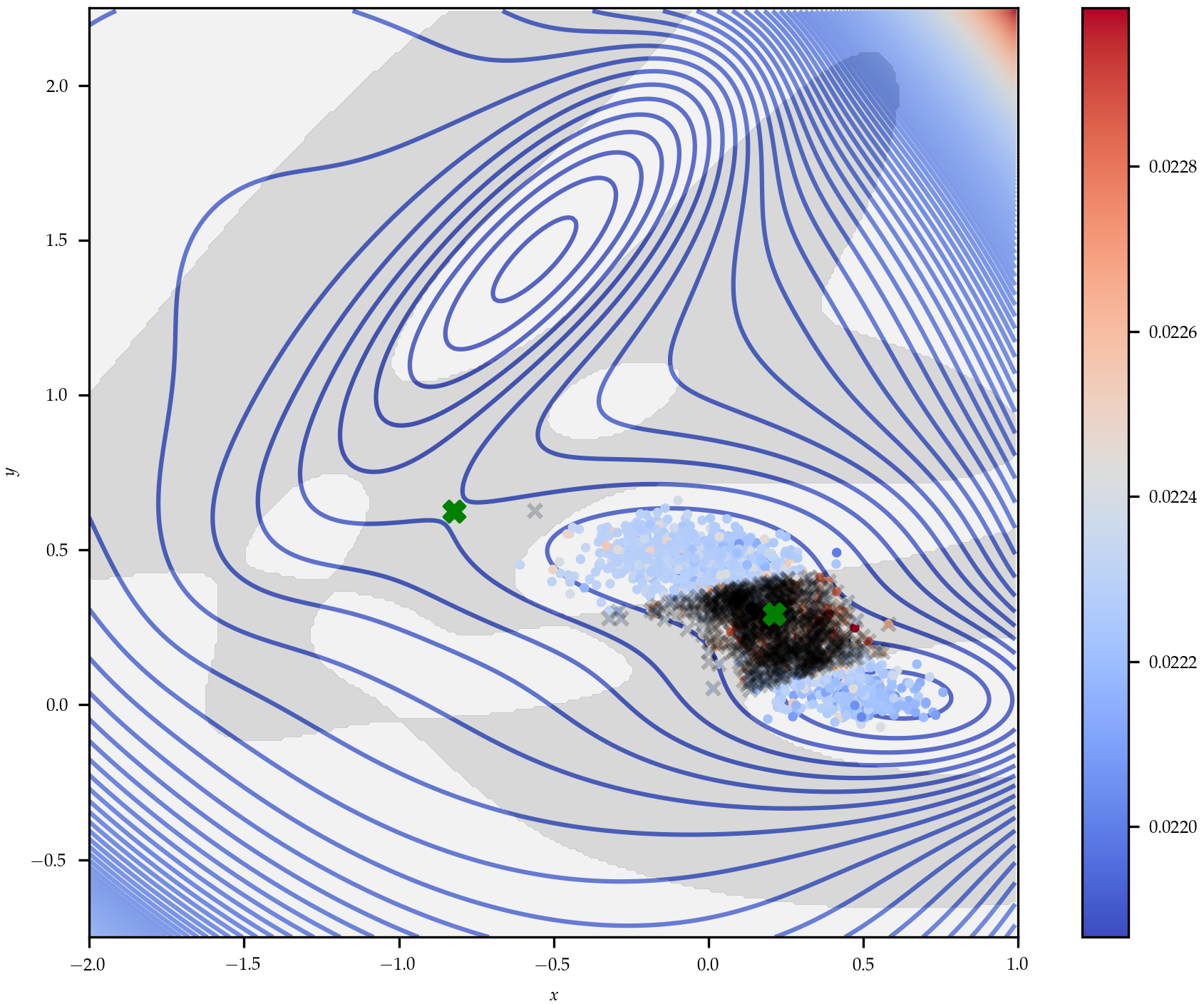}}
\caption{Experiments with the M\"uller-Brown Potential. }
\label{fig:muller-brown-experiment}
\end{figure}

\subsection{M\"uller-Brown potential}\label{sec: muller brown}
In this section, we report on experiments with the M\"uller-Brown potential. We use the same parameters as the ones reported on  \cite[pp.~298]{wales2003energy}. This potential is often used to illustrate the numerical performance of saddle-point algorithms. 
It has a more challenging geometry than the simple double-well potential of the previous section. 
Throughout this section, we use the same color scheme as in the previous section.
The M\"uller-Brown Potential has three local minima ($C_1$, $C_2$, $C_3$) and two saddle points ($T_1$, $T_2$).
These critical points are shown on Figure~\ref{fig:muller brown}, see also Table~\ref{table: muller potential}.

We start Algorithm \ref{alg:capl} at $C_2$ and then call Algorithm \ref{alg:dimer} every $m=10$ steps. 
In Figure~\ref{fig:muller brown vanilla} we see that the particles, after $1000$ steps, tend to concentrate around the $T_2$ saddle point which is lower in energy than $T_1$. In fact, we run Algorithm \ref{alg:capl} together with Algorithm \ref{alg:dimer} for a long time (up to $k=10^5$) and the algorithm did not find $T_2$.
We then run Algorithm~\ref{alg:capl} (with the same initial conditions, parameters and random seed) but, every $10$ steps, instead of calling Algorithm~\ref{alg:dimer}, we call Algorithm~\ref{alg:particle dimer}.
In Figure \ref{fig:muller brown particle}, we plot using grey crosses all the particles used to construct the adjacency matrix of Algorithm \ref{alg:particle dimer} at iteration $k=10$ (i.e. at the first iteration a local search was performed).
In this case, we observe that the resulting graph has two connected components. The first one has many particles around~$T_2$. The second component of the graph has only a single particle  that is close to~$T_1$. We note that this point does not have the highest weight and therefore was not selected as an initial condition in the previous experiment combining  Algorithm \ref{alg:capl} and Algorithm \ref{alg:dimer}.
On the other hand, Algorithm \ref{alg:capl} combined with Algorithm~\ref{alg:particle dimer} was able to identify $T_1$ and $T_2$ after only $k=10$ iterations (see the green crosses on Figure~\ref{fig:muller brown particle}).

This experiment illustrates the fact that Algorithm \ref{alg:particle dimer} is better suited to situations where there are multiple saddle points connected to a particular local minimum.

\subsection{A challenging 2d potential}\label{sec:challenge}
In this section, we discuss the behavior of Algorithm \ref{alg:capl} on a challenging 2d potential where the set  $S$ (defined in \eqref{eq:define set S}) is disconnected, namely $S=S_1\cup S_2$ with $S_1\cap S_2=\emptyset$, and only~$S_2$ contains a saddle point. For the sampling-based method we propose here, the fact that the set $S$ is disconnected presents a challenge because when we initialize the algorithm in one of the sets, say $S_1$, it is difficult for particles to discover $S_2$ since points outside $S$ tend to have a small weight. Moreover, it is known that the dimer method does not behave well in  a situation where $S_1$ does not actually contain any saddle point~\cite{levitt2017convergence}. 
In particular when initialized in $S_1$, there is no guarantee that the dimer algorithm will leave $S_1$ and will converge to a saddle point. 
In the specific example considered in this section, see Figure~\ref{fig:2d disconnect}, if the dimer algorithm is not stopped when leaving $S_1$, it will actually converge to an attractive singularity which is not a saddle point, see the discussion in~\cite[Section 1.4]{levitt2017convergence}.

The potential we used is inspired from~\cite[Section 1.4]{levitt2017convergence}. 
The original potential from~\cite[Section 1.4]{levitt2017convergence} (see $V_1$ below) has an index-1 region but no saddle points. As explained above, the authors used it to illustrate a situation where the dimer method converges to a point that is not an index-1 saddle point. 
Our proposed approach should not suffer from this pathological behavior, since we only use the dimer method to speed up the search of the stochastic saddle point dynamics, which is guaranteed to concentrate in neighborhoods of saddle points\panosb{.} To illustrate this point, we modified the potential from~\cite[Section 1.4]{levitt2017convergence} by adding a local minimum and a saddle point in another index-1 region, sufficiently far away from $S_1$ in order not to perturb the pathological features of the original potential. The modified potential is,
\begin{equation}\label{eq: challenging potential}
V(x_1,x_2)=V_1(x_1,x_2)/Z-V_2(x_1,x_2),
\end{equation}
where $Z=4\times 10^{3}$, $
V_1(x_1,x_2)=(x_1^2 + x_2^2)^2 + x_1^2 - x_2^2 - x_1 + x_2$, and $V_2(x_1,x_2)=e^{-((x_1 - 5)^2 + (x_2 - 5)^2))}$. 
We plot this potential in Figure \ref{fig:2d disconnect} and indicate the two local minima by $M_1$ (the original minimum from~\cite{levitt2017convergence}), and $M_2$ (the local minimum we added). 
Like in earlier sections, the grey region indicates the set $S$, which here has two disjoint connected components $S_1$ and $S_2$. 
The two minima are connected by the saddle point $T_{1} \in S_2$. We initialize Algorithm \ref{alg:capl} at $M_1$. By choosing the temperature parameter $\beta^{-1}$ sufficiently large, our algorithm explores the space outside $S_1$ and, combined with the simple dimer search of Algorithm~\ref{alg:dimer}, it is able to locate $T_1$. 


\begin{figure}
\centering     
\subfigure[The 2d Potential in \eqref{eq: challenging potential} ]{\label{fig:2d disconnect}\includegraphics[width=0.3\textwidth]{{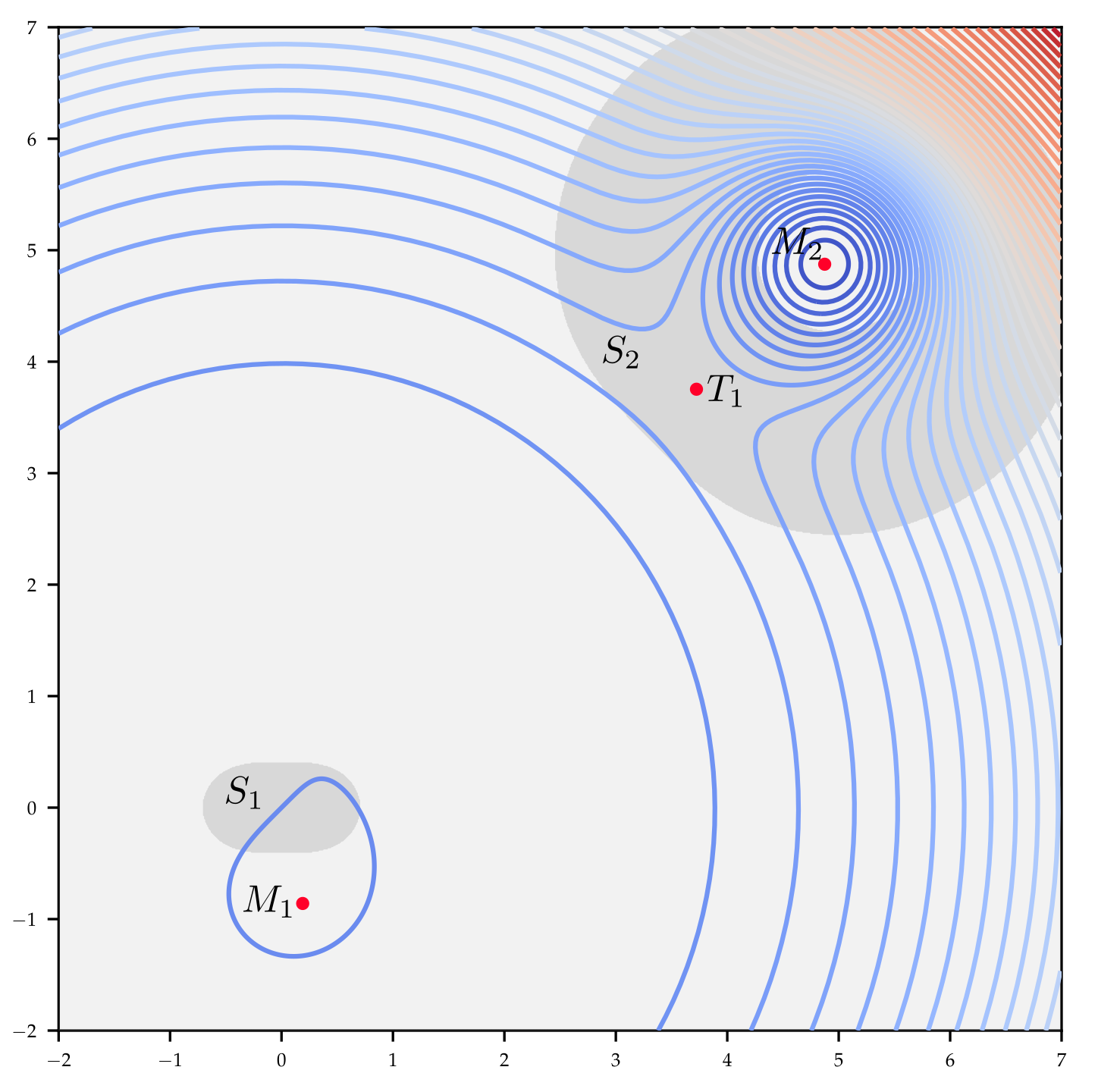}}}
\subfigure[\# Iterations to locate $T_1$ as a function of $\beta^{-1}$]{\label{fig:2d disconnect beta}\includegraphics[width=0.45\textwidth]
{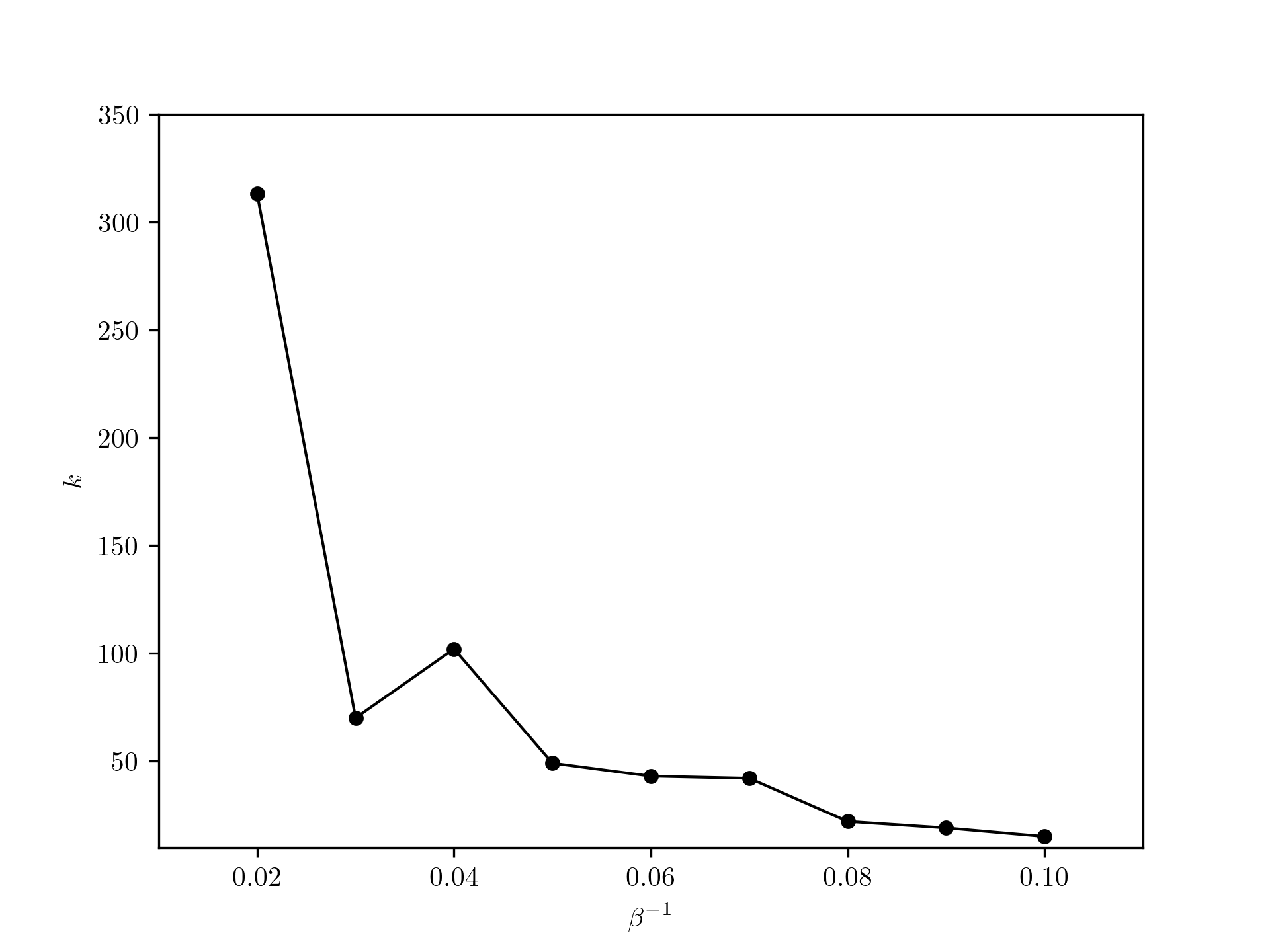}}
\caption{Experiments with a challenging potential. (a) $M_1$ and $M_2$ are local minima, $T_1$ is a saddle point, and $S=S_1\cup S_2$ with $S_1\cap S_2=\emptyset$.  
(b) The amount of iterations it takes for Algorithm \ref{alg:capl} to identify $T_1$ as a saddle point for Algorithm \ref{alg:dimer} as a function of $\beta^{-1}$. The algorithm was initialized from $M_1$\panosb{.}}
\label{fig:2d disconnect both}
\end{figure}

To quantify the regime where $\beta^{-1}$ becomes sufficiently large,  we plot in Figure \ref{fig:2d disconnect beta} the first iteration a call to Algorithm~\ref{alg:dimer} from Algorithm~\ref{alg:capl} is successful (we note that we only run a local search every $m=10$ iterations, so these are necessarily multiples of $10$). 
The lower the temperature, the longer it takes to identify a good initial condition. 
However, even at low temperature, the algorithm  eventually finds it: this just requires a larger number of iterations.

\subsection{Vacancy diffusion}\label{sec:vac diffusion}
So far, we have presented several numerical experiments suggesting the proposed method works as expected in 2d potentials.
In the last two sections, we would like to show that the algorithms we propose are able to locate saddle points on higher dimensional problems. Thanks to the stochastic representation \eqref{eq:sde full} of the Witten partial differential equation~\eqref{eq:FP1}, the algorithm can indeed be implemented in any dimension. In this section, we will discuss in particular how the algorithms can indeed scale to high dimensional problems. 

We use a standard benchmark problem from molecular physics, namely a vacancy diffusion model. 
The energy function is defined using the Morse potential as a pair potential. 
We used the same parameters for the potential as in \cite{gould2016dimer}. 
The atoms are arranged in a lattice, as shown in Figure~\ref{fig:vacancy diffusion}. 
The black \panosb{disks} in the lattice represent atoms that are fixed, whereas the red \panosb{disks} represent atoms that can move freely. 
Figure~\ref{fig:vacancy diffusion} illustrates the motion of a vacancy, an atom moving from its current position (shown in the grey square) to fill a neighbouring vacant position. The configuration in Figure~\ref{fig:vd_index_0} represents the local minimum we initialized Algorithm \ref{alg:capl} from. The structure in Figure~\ref{fig:vd_index_1} is one of the lowest saddle points for the vacancy to move (by the symmetry of the problem, there are of course six equivalent saddle points associated with six equivalent moves of the vacancy). 

For this problem, we run Algorithm~\ref{alg:capl} in conjunction with Algorithm~\ref{alg:dimer}. We initialize the algorithm from the local minimum configuration displayed on Figure~\ref{fig:vd_index_0}. It is easy to change the  dimensionality of the problem by increasing the number of atoms in the lattice that can freely move. It is, therefore, an ideal problem to see how the computational cost of Algorithm~\ref{alg:capl} scales with the dimension of the problem. 
\begin{figure}
\centering     
\subfigure[Local minimum configuration.]{\label{fig:vd_index_0}\includegraphics[width=0.4\textwidth]{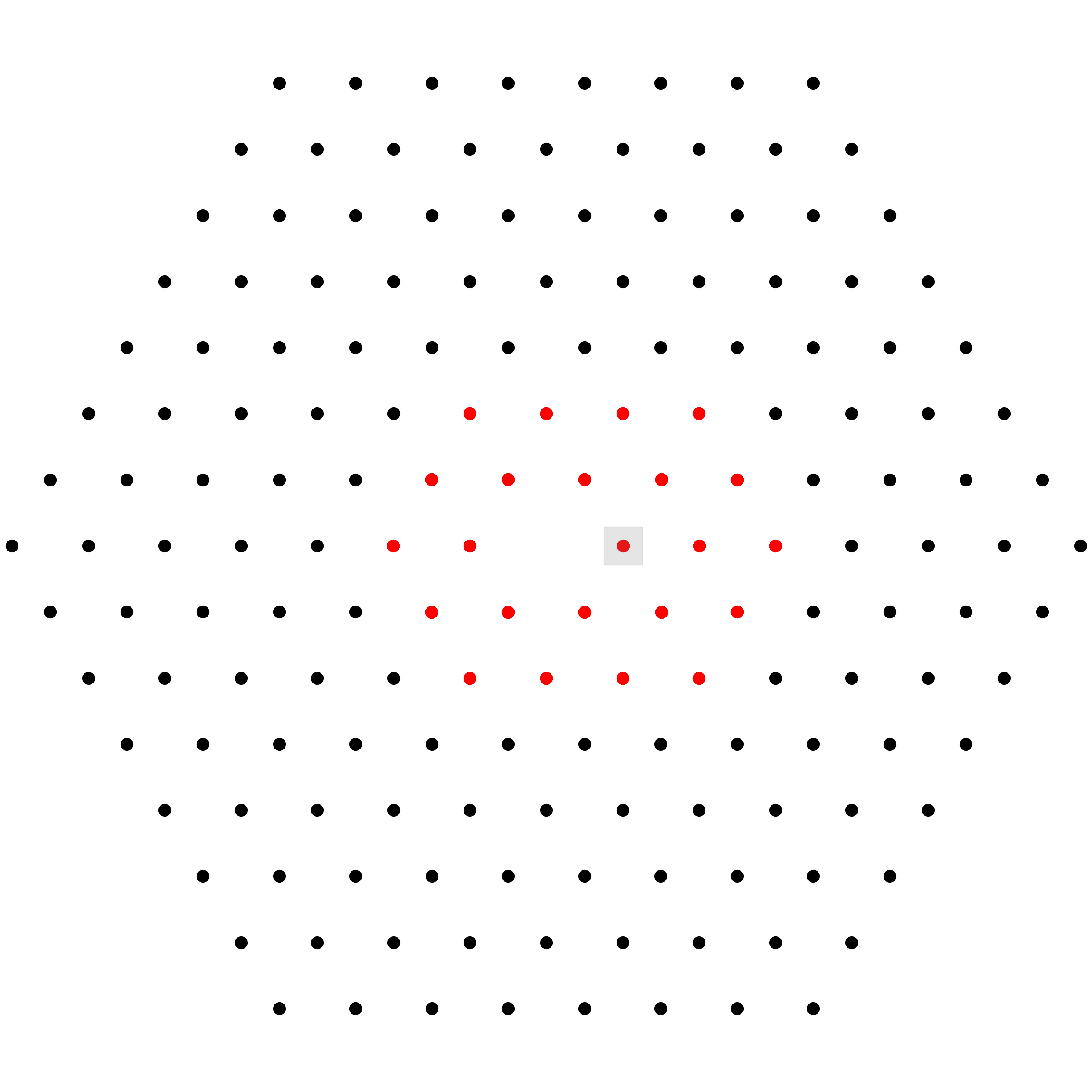}}
\subfigure[Saddle point configuration.]{\label{fig:vd_index_1}\includegraphics[width=0.4\textwidth]{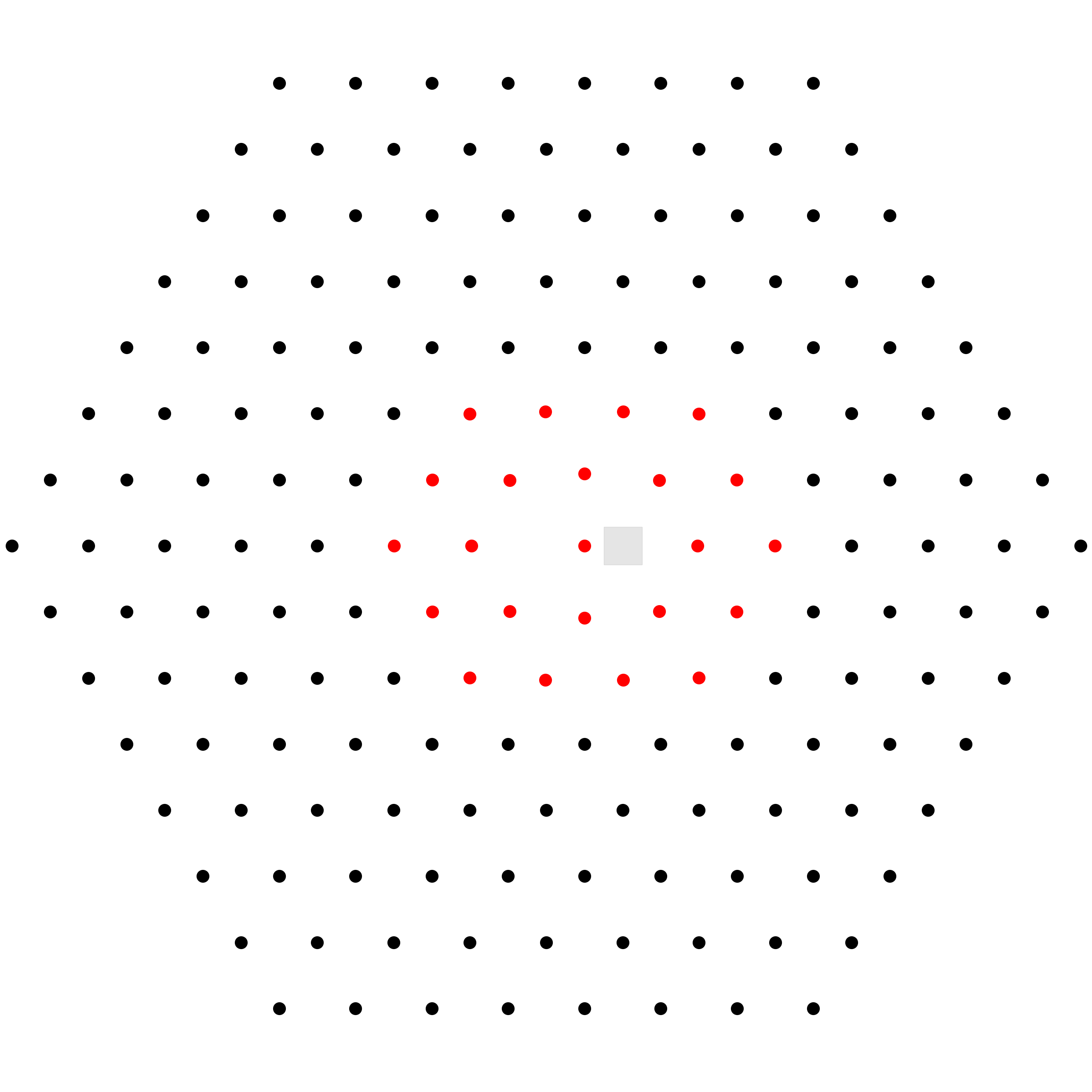}}
\caption{Lattice configurations for the vacancy diffusion problem with 23 free (red) and 145 fixed (black) atoms.}
\label{fig:vacancy diffusion}
\end{figure}
\begin{table}
\begin{center}
\begin{tabular}{ |c|c|c| } 
 \hline
 Dimension & CPU-Time (s)  & $k$  \\ 
  \hline
 18 & 0.85 &  500 \\ 
 46 & 1.68 & 700 \\
  138 & 3.84 & 300 \\ 
  202 & 5.59 & 1000 \\
  278 & 8.21 & 700 \\
 \hline
\end{tabular}
\end{center}
\caption{CPU-time to perform $m=100$ iterations of Algorithm~\ref{alg:capl} on the vacancy diffusion test case, as a function of the dimension of the problem. The third column indicates the first iteration $k$ where a saddle point was identified.}\label{tab:vacancy diffusion}
\end{table}

In Table~\ref{tab:vacancy diffusion}, we give the amount of CPU time (in seconds) it takes to perform $m=100$ steps of Algorithm~\ref{alg:capl} (without including the time of the local dimer search, which is a post-processing step). 
It is clear from Table~\ref{tab:vacancy diffusion} that the algorithm scales linearly with the dimension of the problem, which is twice the number of free atoms. 
There are two reasons for this favorable scaling. 
Firstly, the computations in Steps 8-9 of the algorithm involve only the calculations of gradients of~$V$ and of products of Hessians of~$V$ by vectors: using algorithmic differentiation, both calculations can be performed in time and space complexity that is linear in $d$. 
Secondly, Steps 8-9 can easily be performed in parallel. 
The only time the particles `exchange' information is through the re-sampling step, which has a negligible computational cost. 
Therefore, the structure of Algorithm \ref{alg:capl} fits well with modern parallel computing architectures where the costly operations are communications between cores and data movement. Table~\ref{tab:vacancy diffusion} also provides the first iteration $k$ when a saddle point is found, on a typical run. Notice that $k$ is by construction a multiple of $100$ since $m=100$. We observe that $k$ does not vary too much with the dimension.

\subsection{7-atom Lennard-Jones cluster in dimension 2}\label{sec:lj7}
The purpose of this section is to show that the proposed algorithm can be used to identify the  transition states (namely the saddle points) of a small molecular system, with a simple pairwise interacting potential. More precisely, we report results obtained for the 7-atom Lennard-Jones cluster in 2d, with potential:
\BE
\label{eq:lj7}
V(x_1,\ldots,x_7)=\sum_{i<j}4\epsilon
\left[
\left(\frac{\sigma}{\|x_i-x_j\|}\right)^{12}
-\left(\frac{\sigma}{\|x_i-x_j\|}\right)^6
\right],
\EE  
where $x_i\in\R^2$, $i=1,\ldots,7$ denote the coordinates of the $i^\text{th}$ particle. To keep the numerical values consistent with previous works, we set $\sigma=\epsilon=1$. This potential has been used in many other works and is a standard benchmark problem \cite{dellago1998efficient,wales2003energy,zhang2016optimization,forman2017modeling}. 

\overfullrule=0pt
\begin{table}[!htb]
     \begin{minipage}[t]{.33\linewidth}
      \caption{\label{tab:lj7-min}Minima}
  \begin{tabular}
      {|ccc|} \hline  
     Cluster & Label & Energy  \\
      \hline
      \parbox[c]{1em}{
\vspace*{0.1cm}
      \includegraphics[scale=0.5]{"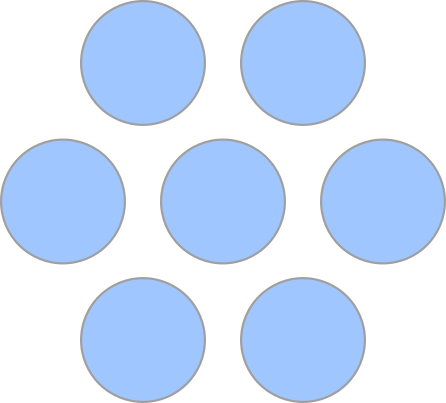"}} & $C_0$ & -12.535   \\  
        \parbox[c]{1em}{
   \vspace*{0.2cm}
      \includegraphics[scale=0.5]{"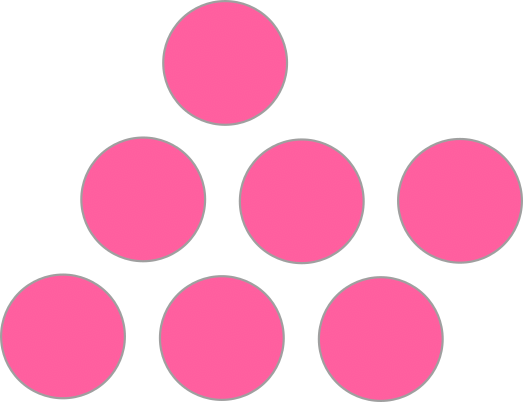"}} & $C_1$ & -11.501  \\  
      \parbox[c]{1em}{
      \vspace*{0.2cm}
      \includegraphics[scale=0.5]{"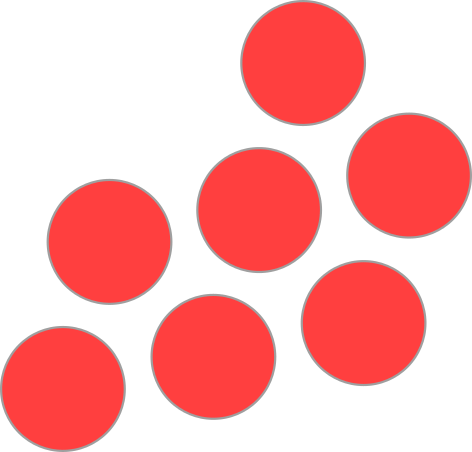"}
       } 
      & $C_2$ & -11.477   \\  

\parbox[c]{1em}
{
      \vspace*{0.1cm}
      \includegraphics[scale=0.5]{"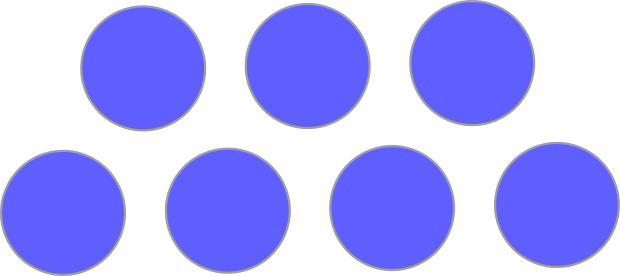"}} & $C_3$ & -11.403    \\
    \hline 
  \end{tabular}
\end{minipage}
    \begin{minipage}[t]{.6\linewidth}
      \caption{\label{tab:lj7-ts}Saddle points}
 \begin{tabular}
      {|ccc|ccc|} \hline  
     Cluster & Label & Energy & Cluster & Label & Energy   \\
      \hline

\parbox[c]{1em}{
\vspace*{0.1cm}
      \includegraphics[scale=0.5]{"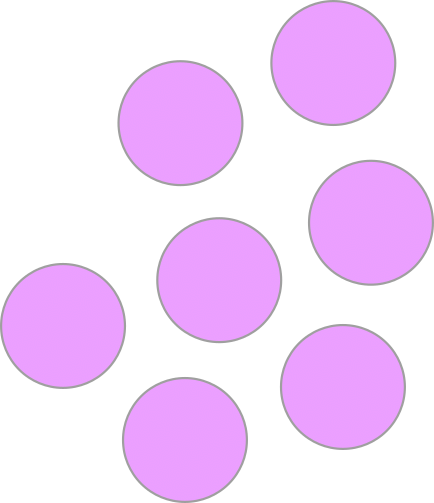"}} & $T^A_{0\leftrightarrow 2}$ & -11.040 &
      \parbox[c]{1em}{
\vspace*{0.1cm}
      \includegraphics[scale=0.5]{"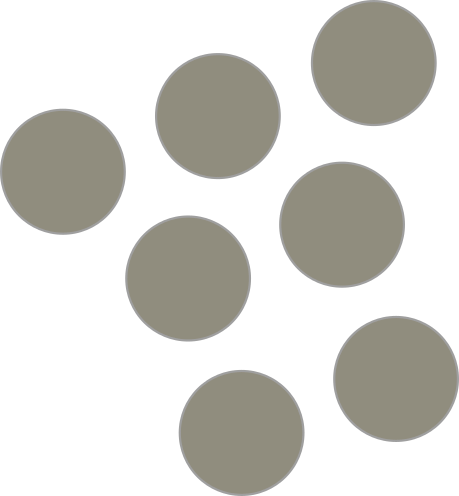"}} & $T^B_{1\leftrightarrow 1}$ & -10.841  \\ 
     
        \parbox[c]{1em}{
   \vspace*{0.1cm}
      \includegraphics[scale=0.5]{"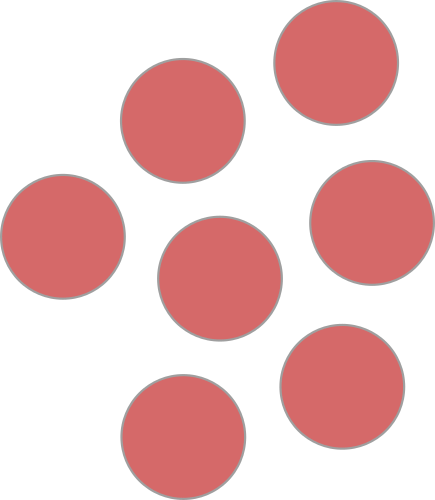"}} & $T^A_{1\leftrightarrow 0}$ &-11.037   &  
     \parbox[c]{1em}{
      \vspace*{0.1cm}
      \includegraphics[scale=0.5]{"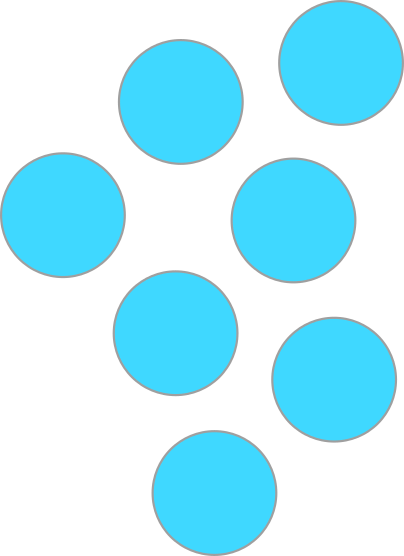"}} & $T^C_{1\leftrightarrow 2}$ & -10.807   \\  
\parbox[c]{1em}
{
      \vspace*{0.1cm}
      \includegraphics[scale=0.5]{"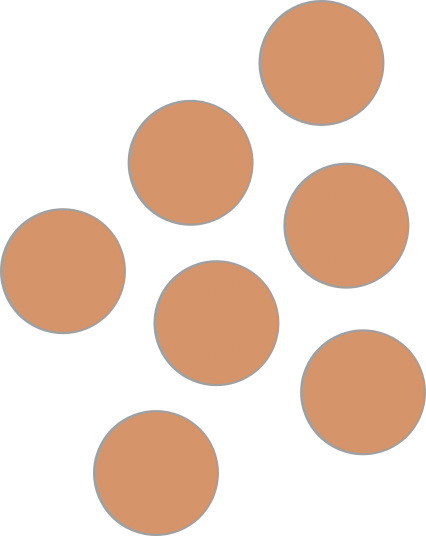"}} & $T^A_{1\leftrightarrow 1}$ &  -10.935  &

\parbox[c]{1em}
{
      \vspace*{0.1cm}
      \includegraphics[scale=0.5]{"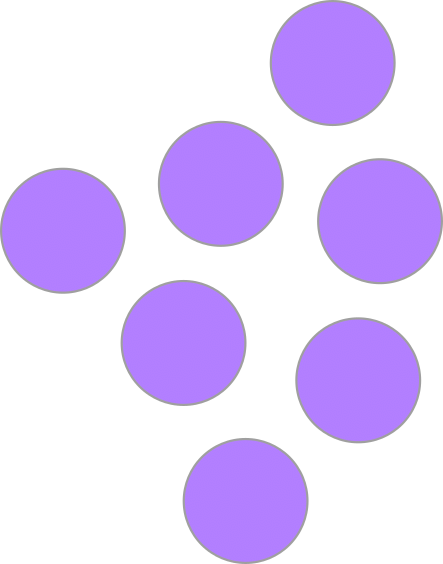"}} & $T^D_{1\leftrightarrow 2}$ & -10.799  \\
 \parbox[c]{1em}{
\vspace*{0.1cm}
      \includegraphics[scale=0.5]{"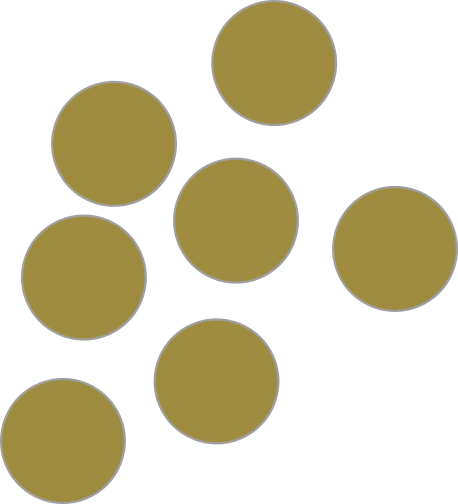"}} & $T^A_{1\leftrightarrow 2}$ & -10.919 &  
        \parbox[c]{1em}{
   \vspace*{0.1cm}
      \includegraphics[scale=0.5]{"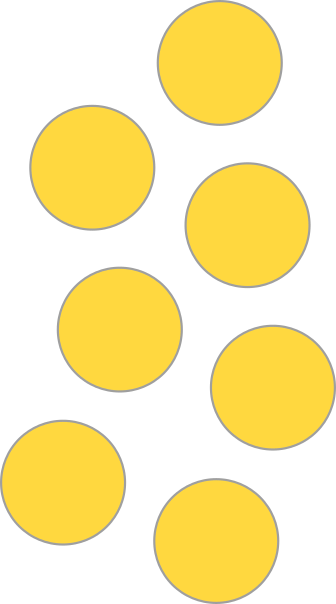"}} & $T^B_{2\leftrightarrow 3}$ & -10.798  \\

    \parbox[c]{1em}{
      \vspace*{0.1cm}
      \includegraphics[scale=0.5]{"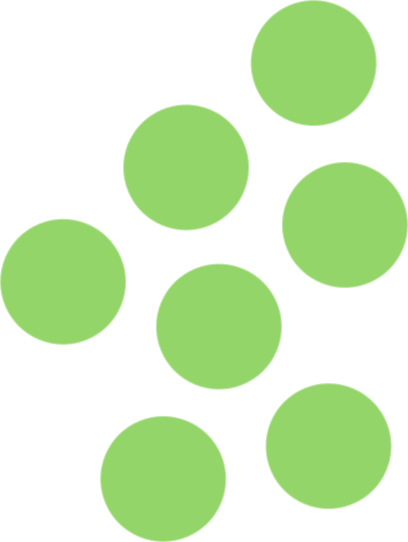"}} & $T^B_{1\leftrightarrow 2}$ & -10.898  &  

\parbox[c]{1em}
{
      \vspace*{0.1cm}
      \includegraphics[scale=0.5]{"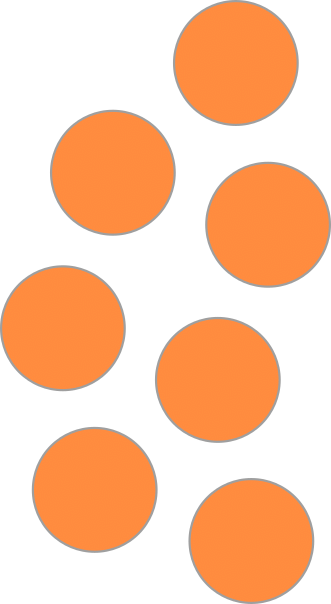"}} & $T^A_{2\leftrightarrow 3}$ & -10.771   \\
\parbox[c]{1em}
{
      \vspace*{0.1cm}
      \includegraphics[scale=0.5]{"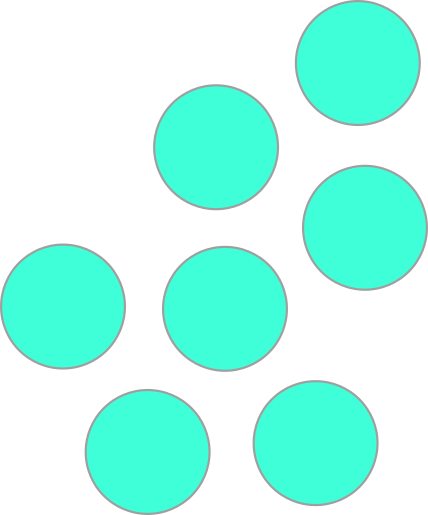"}} & $T^A_{2\leftrightarrow 2}$ & -10.882 & 
  \parbox[c]{1em}
{} &  &    \\
  \hline
  \end{tabular}
\end{minipage}

 \end{table}

The (known) local minima of the function~\eqref{eq:lj7} are shown in Table \ref{tab:lj7-min}.
Minima are denoted by $C_i$, $i=0\ldots,3$ (we used the same numbering as in the original work~\cite{dellago1998efficient}).
The transition states we identified are shown in Table \ref{tab:lj7-ts}. 
Transitions states are denoted by $T^{\bullet}_{i\leftrightarrow j}$, where $i\leftrightarrow j$ indicates that the transition state connects $C_i$ to $C_j$, and the superscript $\bullet$ is used to distinguish different transition states associated with the same local minima $C_i$ and $C_j$. 
The previous work~\cite{dellago1998efficient} identified 5 transition states ($T^A_{0\leftrightarrow2}, \ T^A_{1\leftrightarrow0}, \ T^B_{1\leftrightarrow1}, \ T^C_{1\leftrightarrow2}, \ T^D_{1\leftrightarrow2}$) \panos{and in \cite{wales2002discrete} a total of 19 index-1 saddle points were identified.
We were able to identify the 11 lowest index-1 saddle points. 
We note that the aim of this experiment is not to identify all the saddle points associated with the Lennard-Jones 7 atom potential, but to illustrate how the method can be applied to a non-trivial problem.
}
Let us emphasize that all the configurations (minima and saddle points) are identified up to translation, rotation, reflection symmetry and renumbering of the atoms: this is easily done since all these critical points have different energies, so that one energy can be associated to one critical point.

\begin{figure}
\centering
\includegraphics[width=0.8\textwidth]{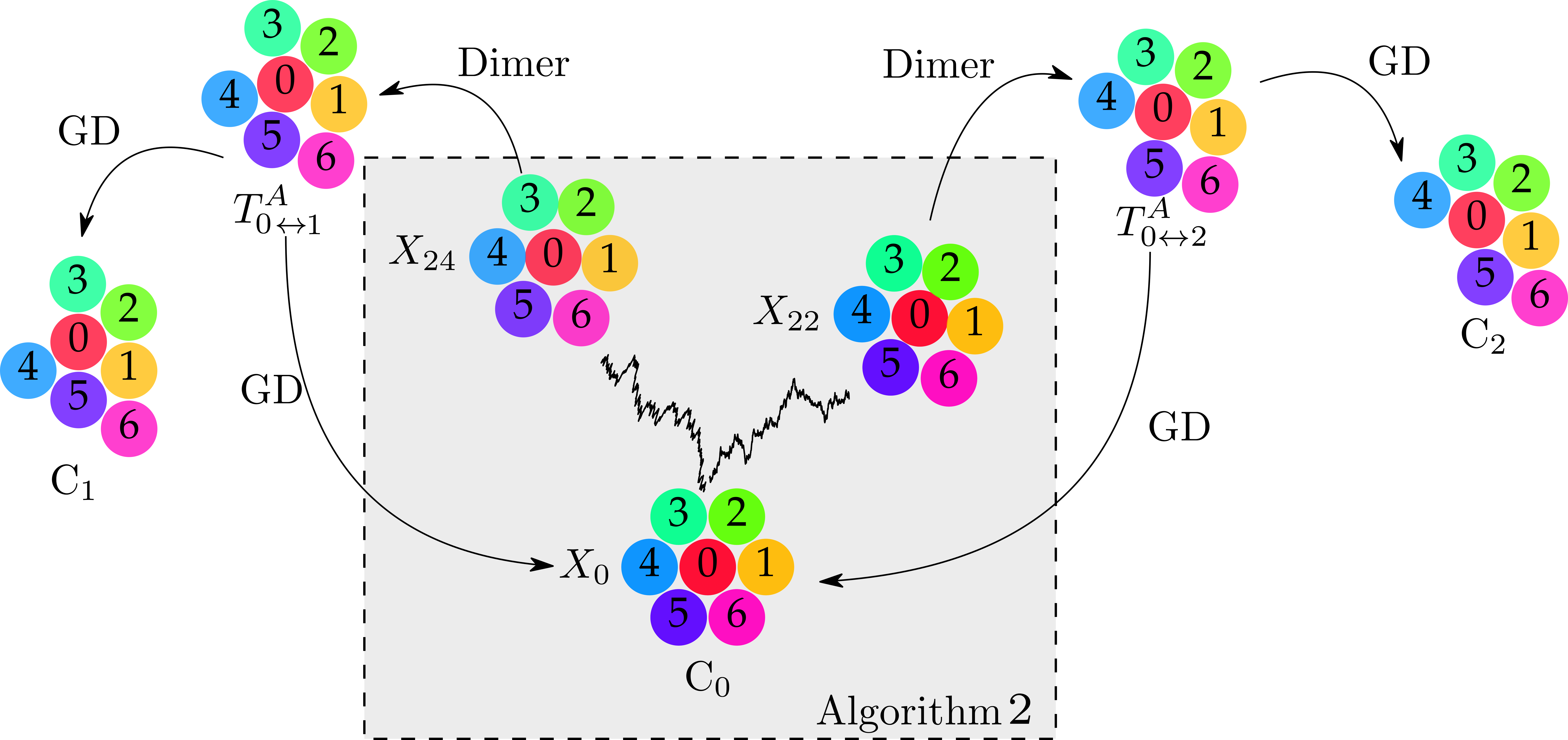}
\caption{\label{fig:c0_transitions}Three step process to identify the geometry of the Lennard Jones cluster (GD=gradient descent).}
\end{figure}

Figure \ref{fig:c0_transitions} illustrates the three step process we used to identify the local minima and transition states we report in Tables \ref{tab:lj7-min} and \ref{tab:lj7-ts}, using Algorithm~\ref{alg:capl} combined with Algorithm~\ref{alg:dimer} for the local search.
In the first step, we initialize Algorithm~\ref{alg:capl} from one of the local minima. In Figure~\ref{fig:c0_transitions}, we schematically represent a possible output of the algorithm when initialized at $C_0$ (which happens to be the global minimum of \eqref{eq:lj7}).
As specified in Algorithm \ref{alg:capl}, every $m$ iterations, we select the particle with the highest weight. For example, in Figure~\ref{fig:c0_transitions}, at iteration $k/m=22$ we selected the initial condition represented as $X_{22}$ on the figure. In the second phase, we perform a local dimer search using Algorithm \ref{alg:dimer}. In Figure \ref{fig:c0_transitions}, we see that the algorithm finds the transition state $T^A_{0\leftrightarrow 2}$.
In the third and final phase of our procedure, we run two gradient descent (GD) algorithms, initialized from the following two points,
\[
u_0^{GD}=u_{ts}\pm\gamma v_1,
\]
where $u_{ts}$ is the saddle point computed by Algorithm \ref{alg:dimer}, $\gamma$ is a small positive constant ($\gamma=0.01$ in all our experiments), and $v_1$ is the eigenvector associated with the smallest eigenvalue of the Hessian of the potential at the point $u_{ts}$.
The aim of this phase is to find the two local minima the transition state is connected to. We say that a saddle point connects two local minima when it lies at the intersection of the boundaries of the basins of attractions of these local minima for the steepest descent dynamics.  One could use the newly found local minimum as an initial condition for a new saddle point search (see e.g. \cite{mousseau2012activation}). 
As can be seen from Figure~\ref{fig:c0_transitions}, in this particular example one gradient descent trajectory converges back to $C_0$ (the initial condition of the saddle point search) and the other to $C_2$. 
In this case, we say that $C_0$ and $C_2$ are connected through $T^A_{0\leftrightarrow 2}$. 
Figure~\ref{fig:c0_transitions} also shows that at iteration $k/m=24$ a configuration is selected which leads to a different transition state ($T^A_{0\leftrightarrow 1}$) that connects $C_0$ to the local minimum $C_1$. Starting from $C_0$ we find two new local minima, $C_1$ and $C_2$. 
Notice that it may be possible to find transition states that are not associated with the initial condition $C_0$. In order to identify more saddle points, the whole procedure is then repeated starting from a new local minimum.

\begin{figure}
\centering
\includegraphics[width=0.4\textwidth]{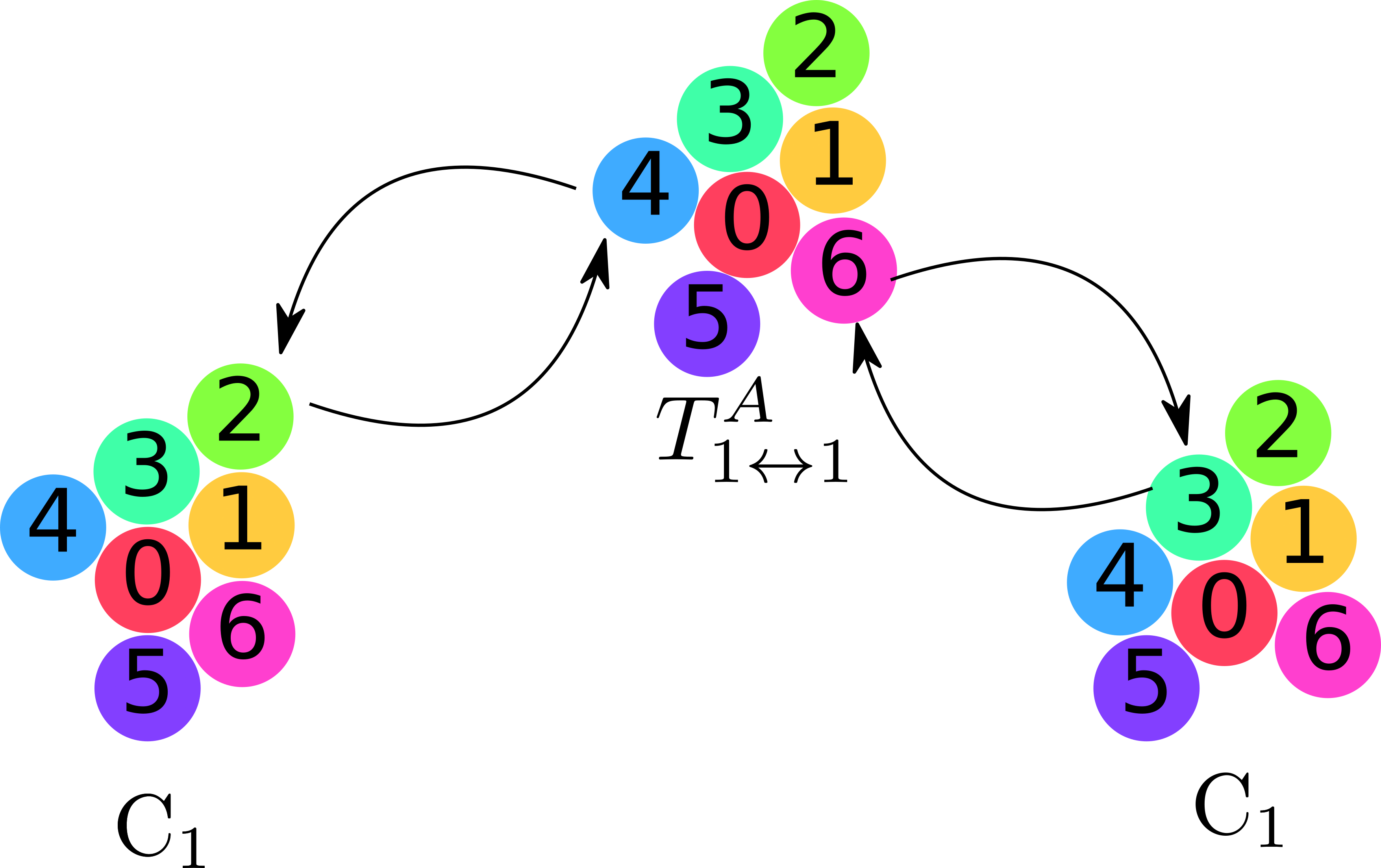}
\caption{\label{fig:c1_transitions} A transition from $C_1$ to $C_1$ through $T^A_{1 \leftrightarrow 1}$.}
\end{figure}

Let us emphasize that a transition state may connect two local minima which are actually the same up to a renumbering. Such a situation is shown in Figure 
\ref{fig:c1_transitions}: the transition state $T^{A}_{1\leftrightarrow 1} $ connects two $C_1$ configurations which are the same up to a reflection symmetry and a renumbering.

\begin{figure}
\begin{center}
\includegraphics[width=0.8\textwidth]{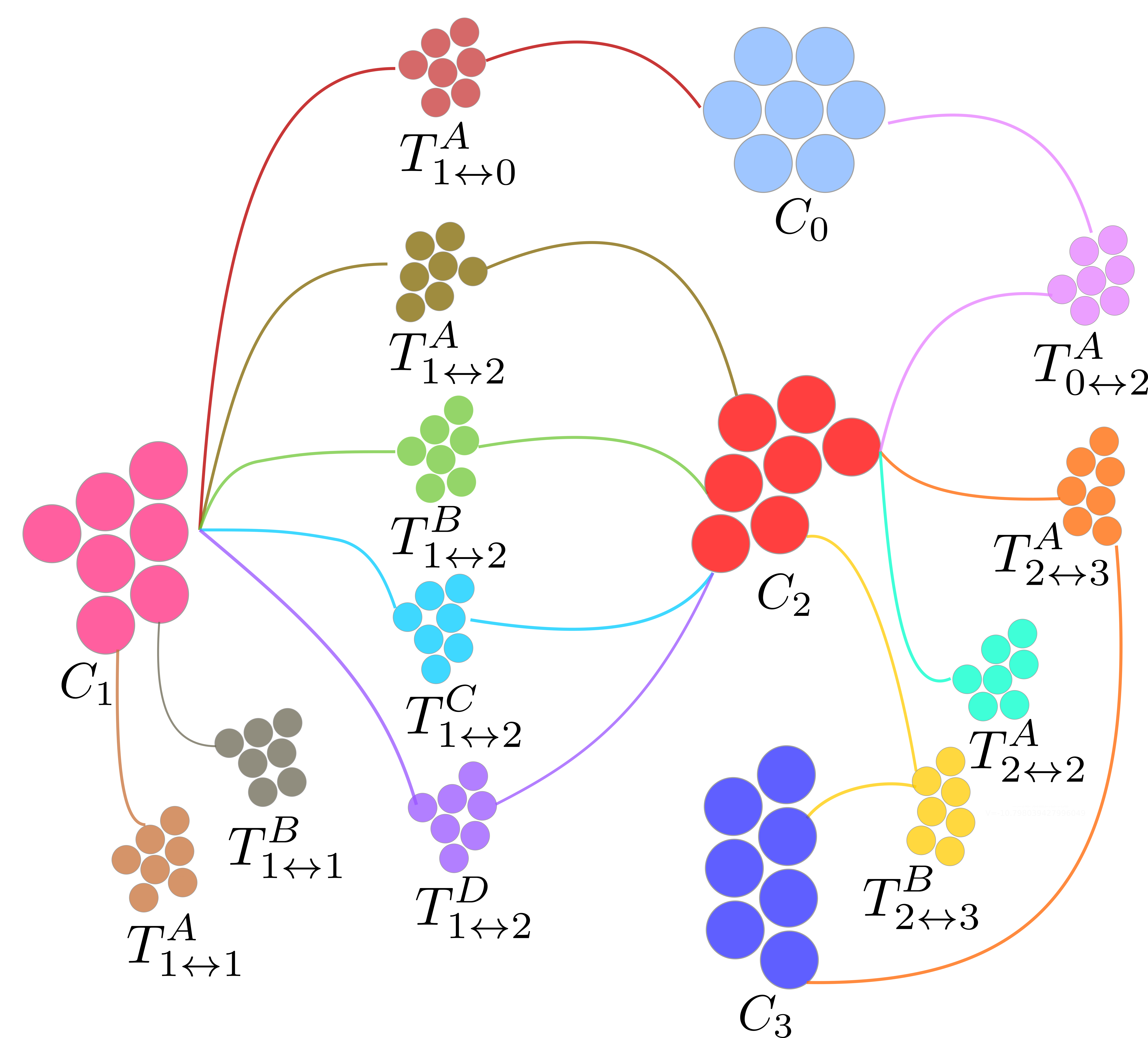}
\caption{\label{fig:lj7_graph} Undirected graph representing the possible transitions for the 7-atom Lennard-Jones cluster in 2d.}
\end{center}
\end{figure}

 We summarize the findings of our numerical experiments in the undirected graph of Figure \ref{fig:lj7_graph}, which represents all the local minima connected by all the transition states we identified.

The process described above can be performed with the classical dimer (Algorithm \ref{alg:dimer}) or the particle dimer (Algorithm \ref{alg:particle dimer}).
Just like we saw in the 2d numerical experiments with the M\"uller-Brown potential in Section \ref{sec: muller brown}, the advantage of the particle dimer method is that it can potentially identify more transition states than the simple dimer method.
 In the next experiment, we check whether this conclusion persists on the higher dimensional test case considered in this section. 
 \panos{
 We report in Figure \ref{fig:lj7 dimer success rate} the transition states identified with Algorithm \ref{alg:capl} initialized from the $C_1$ minimum.
 In these figures, a red dot denotes a transition state identified at iteration $k/m$. The transition states shown in red in the $y$-axis are the ones connected with $C_1$.
 In Figure \ref{fig:dimer iterations vanilla} 
 we use Algorithm~\ref{alg:capl} (SSPD-LS)  combined with Algorithm~\ref{alg:dimer} using the same procedure as described above, but with the resampling step disabled.  
 For Figure~\ref{fig:dimer iterations} and Figure \ref{fig:particle dimer iterations} we enable the resampling and run Algorithm \ref{alg:dimer} and Algorithm~\ref{alg:particle dimer} as the local search method respectively.
 From these results it is clear that the resampling step plays a crucial role in the proposed method. In addition, we observe that the particle dimer method is much more efficient. In fact, in this case, starting from $C_1$, Algorithm~\ref{alg:capl} combined with the particle dimer method (Algorithm~\ref{alg:particle dimer}) is able to identify all the transition states we found at once, without considering new runs from different local minima than $C_1$.}

\begin{figure}
\centering     

\subfigure[\panos{Local Search with Algorithm \ref{alg:dimer} \& without resampling} ]{\label{fig:dimer iterations vanilla}\includegraphics[width=0.4\textwidth]{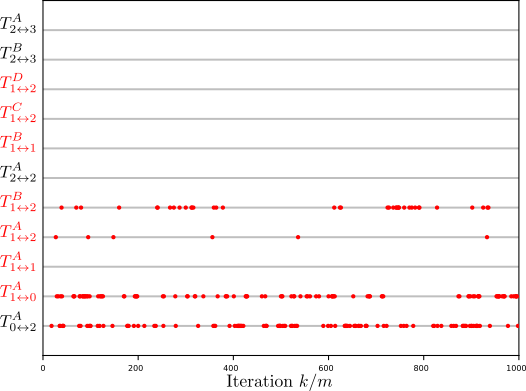}}\\
\subfigure[Local Search with Algorithm \ref{alg:dimer} ]{\label{fig:dimer iterations}\includegraphics[width=0.4\textwidth]{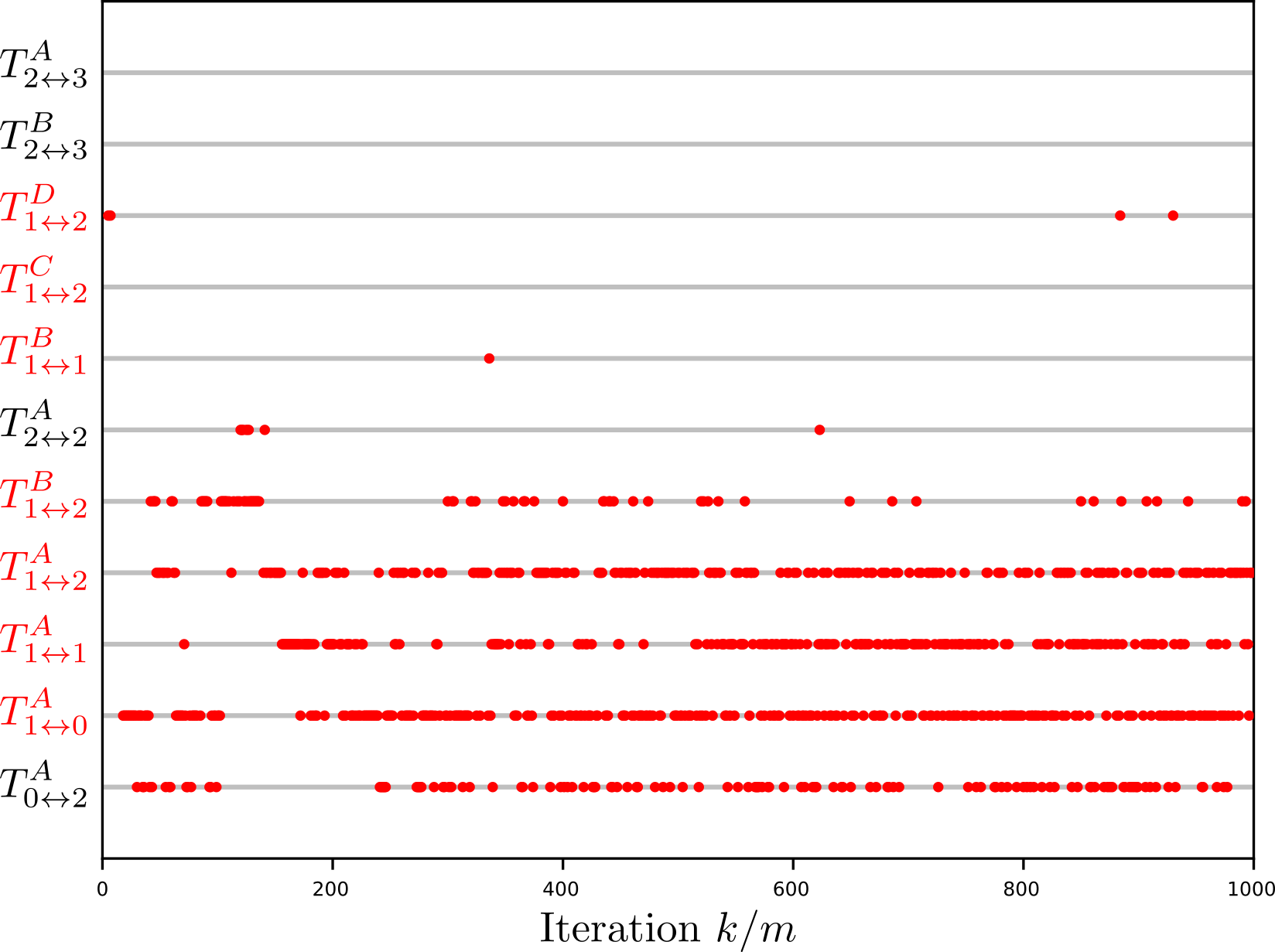}}
\subfigure[Local search with Algorithm \ref{alg:particle dimer}  ]{\label{fig:particle dimer iterations}\includegraphics[width=0.4\textwidth]{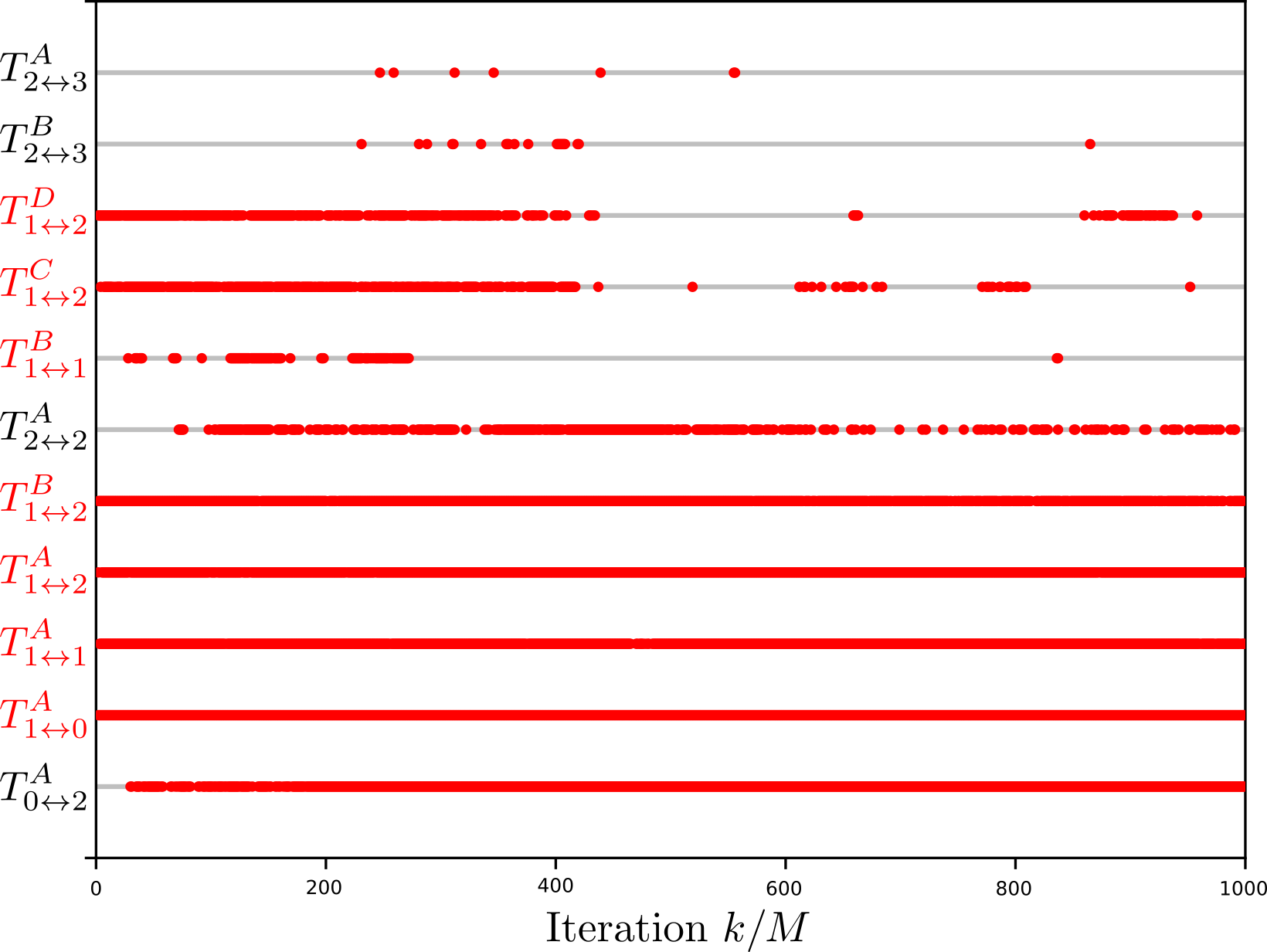}}
\caption{
Comparison of the performances of SSPD-LS combined with Algorithm \ref{alg:dimer} (Figure~\ref{fig:dimer iterations}) and Algorithm \ref{alg:particle dimer} (Figure~\ref{fig:particle dimer iterations}) for the local search of the transition states, on the 7-atom Lennard Jones cluster test case.
\panos{Figure \ref{fig:dimer iterations vanilla} shows that without the resampling step in SSDP-LS, the performance degrades a lot. }
}

\label{fig:lj7 dimer success rate}
\end{figure}

\section{Conclusions}\label{sec:conc}

In this article, we proposed a new algorithm to locate index-1 saddle points, based a probabilistic representation of the solution to the Witten partial differential equation~\eqref{eq:FP1} on $1$-forms. This algorithm can be seen as the natural extension to index-1 saddle points of the stochastic gradient descent method which locates local minima. We would like to explore in the future various extensions of the algorithm we propose, in particular to accelerate the discovery of many saddle points, in the spirit of enhanced sampling methods which are used in Markov Chain Monte Carlo algorithms to explore efficiently many local minima of a potential function (adaptive biasing methods, conditioning techniques, etc.)

\subsection*{Acknowledgements}
The authors would like to thank Antoine Levitt, Xue-Mei Li and Christoph Ortner for enlightening discussions on, respectively, saddle point search, stochastic differential geometry and benchmark test problems. The authors are also grateful to David Wales for very interesting feedbacks on an early version of this work, and to the anonymous reviewers for their useful suggestions and comments.
This work benefited from the support of the European Research Council under the European Union's Horizon 2020 research and innovation programme (grant agreement No 810367), project
EMC2. 
This work also benefited from the support of the project ANR QuAMProcs (ANR-19-CE40-0010) from the French National Research Agency. 
This project was initiated as TL was a visiting professor at Imperial College of London (ICL), with a visiting professorship grant from the Leverhulme Trust. 
The Department of Mathematics at ICL and the Leverhulme Trust are warmly thanked for their support.
The second author acknowledges support by JPMorgan Chase \& Co under J.P. Morgan A.I. Faculty Awards in 2019 and 2021. PP was also partially supported by the EPSRC through grant number EP/W003317/1.

\bibliographystyle{siamplain}
\bibliography{saddles_bib}

\clearpage
\section*{Supplementary Material}
~
\begin{table}[h]
\footnotesize
  \caption{Index of Animations}\label{tab:animation index}
\begin{center}
  \begin{tabular}{|l|l|} \hline
  Figure  & \bf File Name  \\ \hline
    Figure 1a & \url{https://www.doc.ic.ac.uk/~pp500/animations/2dpdeFP.mp4}  \\
    Figure 1b &  \url{https://www.doc.ic.ac.uk/~pp500/animations/2dpdeWitten.mp4} \\ 
    Figure 3 &  
    \url{https://www.doc.ic.ac.uk/~pp500/animations/2doublewell.mp4} \\ 
    \hline
  \end{tabular}
\end{center}
\end{table}

\end{document}